\newtheorem{assumption}[theorem]{Assumption}
\title{Limitations for nonlinear stabilization over uncertain channel}
\author{Umesh Vaidya\thanks{ Dept. of Elec. and Comp. Engg., Iowa State University,
Ames, IA-50011({\tt \small
ugvaidya@iastate.edu}).} \and Nicola Elia
\thanks{Dept. of Elec. and Comp. Engg., Iowa State University,
Ames, IA-50011 ({\tt nelia@iastate.edu}).
This paper is an extended version of the paper ``Limitations for nonlinear stabilization over erasure channel" appeared in the proceedings of IEEE Control and Decision Conference, 2010, Atlanta, GA. }}
\begin{document}

\maketitle

\begin{abstract}
We study the problem of mean-square exponential incremental stabilization of nonlinear systems over  uncertain communication channels. We show  the ability to stabilize a system over such channels is fundamentally limited and  the channel uncertainty must provide a minimal Quality of Service (QoS) to support stabilization. The smallest QoS necessary for stabilization is shown as a function of the positive Lyapunov exponents of uncontrolled nonlinear systems. The positive Lyapunov exponent is a measure of dynamical complexity and captures the rate of exponential divergence of nearby system trajectories.
One of the main highlights of our results is the role played by nonequilibrium dynamics to determine the limitation for incremental stabilization over networks with uncertainty.


%
%


\end{abstract}

\section{Introduction}

Networked controlled systems have been the focus of much research in recent years \cite{networksystems_specialissue, communication-channel, sekhar.thesis, anant, hadjicostis, jump_estimator, Luca03kalmanfiltering, michales, martins_elia, nair-ms, savkin1}. Among several relevant questions, one main problem is to characterize the limitations of closed loop stability and performance
induced by the presence of unreliable communication channel(s) in the loop.

The vast majority of the studies have focused on Linear Time-Invariant (LTI) plants under a variety of settings and assumptions.
Tatikonda and Mitter \cite{sekar_TAC} has considered finite-rate channels and showed  the required communication rate for stabilization is function of the unstable eigenvalues of the plant.
Sahai \cite{anant}  considered noisy channel models and showed a new notion of reliable communication, different than Shannon's,
is required to obtain the moment stability of the closed loop.  Besides making clear that different notions of stability may require different notions of reliable
communication in the context of a simple binary erasure channel, \cite{anant} has identified fading in the channel rather than its discrete nature, a main source of
 limitation for reliable communication. In fact, the binary erasure channel can be thought of as a 1-bit finite-rate channel with extreme $0-1$ fading.
 The focus on fading has lead to the introduction of analog erasure channels, which we consider in this paper, where the quantization effects are neglected and the channel is modeled as a random independent
 Bernoulli switch \cite{hadjicostis,jump_estimator,michales}. This model and its generalization to Markovian-dependent switches is a simple, yet effective, model for packet-drops links,
 and allows study of the stability limitations induced by these channels in the limit of large packet length, using classical stochastic systems techniques and tools. It is difficult to summarize the plethora of research in the area of control of LTI systems  involving an independent, identically distributed (IID) Bernoulli switch. The notion of stability, the location of the channel(s)
(at the sensor or actuator side or both) \cite{scl04, networksystems_foundation_sastry, network_basar_tamar}, the information available to encoders and decoders (like channel state information (CSI) or knowledge of other signals in the loop)
and their computational power \cite{scl04,gupta},  and, of course, other performance objectives \cite{sinopoli.acc05,network_basar_tamar,prashanthacc} affect the required minimal Quality of Service (probability of successful delivery of the packets) by the communication
link. For example, in the case of one channel on the sensor side with mean-square stability, no encoding, and CSI used at the decoder, the QoS is a function of all the unstable eigenvalues of the plant \cite{tac11}. If a Kalman filter, which uses the plant input, is allowed as an encoder and the entire estimated state of the system is sent over the channel,
then the minimal QoS is only a function of the plant's  eigenvalues of the largest magnitude \cite{gupta}. Finally, if no encoder is allowed and the stability notion is of bounded second moment instead,
then the limitation is not completely characterized, and the QoS is bounded between the previous two cases \cite{Luca03kalmanfiltering,tac11}.
While the picture is fragmented, at the same time it is clear, channel fading fundamentally limits the ability to stabilize the networked system, at least in term of moments.

For completeness, we  point out  a current body of literature on the related problem of state estimation over noisy channels; see, for example,
\cite{Luca03kalmanfiltering,schwartz_opt_rec, murray_epstein}. When the erasure channel is on the sensor side, like in the estimation problems, the development is somewhat simplified because the channel state
is available at the receiver side and can be included in the design of the estimator \cite{Luca03kalmanfiltering}.
In this case, the design approach for Markovian Jump Linear Systems (MJLS) \cite{control_MJLS_costa} can be quite useful, \cite{ jump_estimator, estimation_MJLS}.
When the channel is on the actuator side, the problem is more involved, as the controller and the decoder, which can use the CSI, are physically separated and their design
is decentralized, in general. Excluding some special cases \cite{tac11}, the controller cannot be an instantaneous function of
the channel state. For example in this paper's setup this makes therefore the MJLS design unamenable to this problem.

While there are many results involving the stabilization of linear systems over communication networks, little has been developed for nonlinear systems with the notable exception by \cite{Nair04, mehta_fund}. The analysis of the limitations for the stabilization of nonlinear systems is reduced through linearization or through robust control theory to the linear case involving the eigenvalues of the linearized system.
The results in \cite{Nair04} and \cite{mehta_fund} have connected the limits for stabilization with topological and measure theoretic (or metric) entropy corresponding to equilibrium dynamics, respectively.
Even less work has  considered fading channels \cite{amit_observation, scl_erasure}. In \cite{scl_erasure}, the limitation of stabilization for a scalar nonlinear system over the fading channel is studied to provide a necessary condition for general $q^{th}$ moment stabilization. The various proofs involved in the vector case are fundamentally different and cannot be trivially extended from the scalar case. Similarly \cite{amit_observation, cdc_erasure,amit_observation_conference}, developed along with this paper, considered the estimation and stabilization problem and used many of the ideas developed in this paper. However, the distinctive feature of the stabilization problem is the limitations for stabilization are shown to be offered by two competing local and global dynamics of the system. Furthermore, unlike results in \cite{amit_observation, cdc_erasure}, results in this paper are proved under the noncompact assumption on the state space and in the presence of additive noise term. In this paper, we also consider more general channel uncertainty model as opposed to Bernoulli (on-off)  channel uncertainty considered in \cite{amit_observation,cdc_erasure}. Establishing connection between various notions of stabilities, in particular the incremental mean square and bounded second moment stability for nonlinear systems evolving on unbounded state space, also form one of the main contributions of this paper.

In this paper, we fill the gap and study the limitations of the stabilization for a class of nonlinear unstable systems over general uncertain channels.
We consider a set-up sufficiently simple  to allow characterization of
the limitations and, at the same time, not too unrealistic. We assume the controller has perfect
access to the plant's  state. However, it can act on the plant's input through an uncertain channel. Moreover, on the receiver's side, there is no sophisticated decoder that exploits the
CSI. This is a simplified model of a User Datagram Protocol (UDP)-like or best effort protocol \cite{network_basar_tamar}.  While more complex schemes can be studied using our approach, their investigation is outside the scope of this paper.
The main contribution of this paper is to show  the open loop, nonequilibrium dynamics of the plant also imposes minimal requirements on the QoS of the link, together with the local instability of the equilibrium point.
This represents an important departure from local equilibrium results and shows the relevance of nonequilibrium dynamics. In particular, it shows a failure to allocate QoS to the unstable nonequilibrium dynamics and only focus on the unstable equilibrium dynamics can have
disastrous consequences.  Thus, the problem of characterizing the limitations of the stabilization and estimation of network nonlinear  systems over noisy, uncertain channels is a timely research
topic, given the important role that nonlinear dynamics play in applications, such as network power systems and biological networks.
Characterizing these fundamental limitations will help understand important tradeoffs between uncertainty and robustness in such complex networked systems.

In this paper, we study the problem of characterizing such fundamental limitations in the stabilization of a class of nonlinear systems controlled
over general uncertain channel. The case of  on/off fading channel with IID Bernoulli fading forms the special case of the more general uncertain channel model considered in this paper. The objective is to characterize the quality of service of the channel in terms of probability of successful transmission to guarantee  mean-square exponential incremental
stability of the closed-loop system.  Mean-square exponential incremental stability implies exponential convergence of all  system trajectories.
This form of stability for nonlinear systems is studied using Lyapunov function and contraction analysis \cite{Incremental_david, slotine_contraction}.
Incremental stability of a nonlinear system is stronger than global stability of the equilibrium point.
In fact, for autonomous nonlinear systems, incremental stability implies global convergence to an equilibrium point.
Incremental stability is shown to play an important role in the problems of synchronization, nonlinear tracking, and nonlinear observer design \cite{Incremental_david, slotine_contraction, Sepulchre_inter}.

The main result of this paper proves  fundamental limitations arise for mean-square exponential incremental stabilization.
The limitations are expressed in terms of the probability of erasure and positive Lyapunov exponents of the uncontrolled open-loop system.
Positive Lyapunov exponents are a measure of dynamical complexity, since they capture the rate of exponential divergence of nearby system trajectories \cite{Ruelle85}.

This emergence of nonequilibrium dynamics as the limiting factor for stabilization arises, due to the stronger notion of incremental stability  we use in
this paper and the presence of uncertainty in the form of erasure in the feedback loop. The uncertainty in the feedback loop has
the ability to steer the system state away from the equilibrium point, where the nonequilibrium dynamics of the system dominates \cite{scl_erasure}.

The paper presents following important innovations: 1) Extends the framework of random dynamical systems \cite{Kifer, arnold_book_rds} to controlled dynamical systems
\cite{scl_erasure,cdc_erasure}. 2) Connects the stability requirement with the Quality of Service of the channel and the positive Lyapunov exponents of the open-loop system. In this sense, the results of the paper generalize those of \cite{scl04} and the Lyapunov exponents emerge as a natural generalization of the
linear system eigenvalues to capture the limitations of nonlinear networked systems. 3) Connects various notion of stochastic stabilities on unbounded state space in particular incremental mean square exponential stability and bounded second moment.

The organization of the paper is as follows. Section \ref{preliminary} presents some preliminaries and the stability definition used in this paper. In Section \ref{section_motivation}, we present a motivating example for the results developed in this paper. Section \ref{limitation_section} shows the main result on the performance limitations for mean-square exponential incremental stabilization. The main result from Section \ref{limitation_section} is used to provide limitations for incremental stabilization under various assumptions on system dynamics in Section \ref{section_main}. Simulation results are presented in Section \ref{section_simulation}, followed by conclusions in Section \ref{section_conclusion}.
\section{Preliminaries and definitions}\label{preliminary}
We consider the problem of stabilization of a multi-state multi-input nonlinear system of the form
\begin{equation} x_{n+1}=f(x_n)+ B v_n+\gamma_n, \label{system1}
\end{equation}
where $x_n\in \mathbb{R}^N$ is the state,
$v_n\in \mathbb{R}^d$ is the plant  input, and $\gamma_n\in \mathbb{R}^N$ is assumed to be an IID random variable satisfying following statistics
\[E_{\gamma_n}[\gamma_n]=0,\;\;\;E_{\gamma_n}[\parallel \gamma_n \parallel^2 ]\leq C\]
for some constant $C$. The system mapping, $f$, satisfies the following assumptions.
\begin{assumption}\label{assumption_1}
The system mapping $f: \mathbb{R}^N \rightarrow \mathbb{R}^N$ is assumed to be at least $C^1$ function of $x$ and the
Jacobian $\frac{\partial f}{\partial x}(x)$ is assumed to be invertible and uniformly bounded for almost all (w.r.t. Lebesgue measure) $x\in \mathbb{R}^N$ and for $x=0$.  $x=0$ is an unstable equilibrium point with eigenvalues $|\lambda^i_0|>1$ for $i=1,\ldots N$ of the Jacobian $\frac{\partial f}{\partial x}(0)$. The control matrix, $B$, satisfies $B'B>0$.
\end{assumption}

\begin{remark} The Assumption (\ref{assumption_1}) on all the eigenvalues outside the unit disk is a technical assumption and is made for the simplicity of presentation of the main results of the paper. The proof for the case where some of the eigenvalues are negative can be done by using the technique of tempered transformation \cite{Katok} which allows one to decompose the state space along the directions of stable and unstable manifolds.
\end{remark}
\begin{assumption}\label{assumption_2} Assume  pair $(f(x),B)$ satisfies the following assumption. There exist positive constants $k_1$, $k_2$ and integer $k\geq 0$, such that
\begin{eqnarray}
k_1 I\leq \sum_{\ell=0}^{k} \Phi(x_k,x_{\ell})BB'\Phi'(x_k,x_{\ell}) \leq k_2 I, \label{controllability_cond}
\end{eqnarray}
for almost all with respect to the Lebesgue measure initial condition, $x_0\in \mathbb{R}^N$ and for $x_0=0$, where $x_{n+1}=f(x_n)$, and $\Phi(x_n,x_0):= \frac{\partial f}{\partial x}(x_n)\cdots \frac{\partial f}{\partial x}(x_0)=\prod_{k=0}^n \frac{\partial f}{\partial x}(x_k)$.
\end{assumption}
\begin{remark}
Assumption \ref{assumption_2} corresponds to uniform completely controllability of the linearized system, $\eta_{n+1}=\frac{\partial f}{\partial x}(x_n)\eta_n+B\vartheta_n$, with $\eta_n\in \mathbb{R}^N$, $\vartheta_n\in \mathbb{R}^d$, and $x_{n+1}=f(x_n)$ \cite{kwakernaak}. Condition (\ref{controllability_cond}) can also be related to the Lie-algebra-based controllability condition for discrete-time nonlinear systems \cite{controllability_discrete-time_sontag}.
\end{remark}
In this paper, we consider the following multiplicative channel model,
\begin{eqnarray}
v_n=\xi_n u_n,\label{channel}
\end{eqnarray}
where $\xi_n\in \mathbb{R}$ is an IID random variable with following statistics
\begin{eqnarray}
E[\xi_n]=\mu,\;\;\;E [(\xi_n-\mu)^2]=\sigma^2\neq 0.\label{uncertain_statistics}
\end{eqnarray}
Note that the Bernoulli random variable with $\xi_n\in\{0,1\}$ which is used in the modeling of erasure channels or packet-drop links with a negligible quantization effect will be the special case of more general random variable as defined above with the non-erasure probability $p=\frac{\mu^2}{\mu^2+\sigma^2}$ and  erasure probability $(1-p)=\frac{\sigma^2}{\mu^2+\sigma^2}$. For the special case of Bernoulli random variable, we assume  the controller does not have access to the channel state information.  So, when an erasure exists and $\xi_n=0$, no control input is applied to the plant.
More general settings are definitely of interest and have been considered in the literature in the context of linear systems.
However, the main point is that the limitations of fading links will emerge in all the settings, although with different conditions.
Thus, in this paper we consider the simplest, yet realistic, setup to derive our analysis.

The control input, $u_n$ sent over the uncertain link is assumed  of the form
\begin{eqnarray}
u_n=k(x_n)+w_n,\label{controller1}
\end{eqnarray}
where $k$ is the controller and $w_n$ is the exogenous but deterministic input. All results of this paper are derived under the following assumption on the controller dynamics.
\begin{assumption}\label{assumption_controller} The controller  $k:\mathbb{R}^N\to  \mathbb{R}^d$ is assumed to be memoryless, deterministic, and is at least $C^1$ function of the state $x$ with uniformly bounded Jacobian and $k(0)=0$. Furthermore, $(\frac{\partial f}{\partial x}+\mu B\frac{\partial k}{\partial x})' B(\frac{\partial f}{\partial x}+\mu \frac{\partial k}{\partial x})\geq \alpha I$ for almost all $x\in \mathbb{R}^N$, where $\mu=E[\xi_n]$.
\end{assumption}

Combining (\ref{system1}), (\ref{channel}), and (\ref{controller1})  the feedback control that we study in this paper is represented by following dynamic equation
\begin{eqnarray}
x_{n+1}=f(x_n)+\xi_n Bk(x_n)+\xi_n B w_n+ \gamma_n. \label{system_feedback_noise}
\end{eqnarray}

The objective is to design a state feedback controller, $k(x_n)$, such that the system (\ref{system_feedback_noise}) is mean-square exponential incrementally stable.  In simple terms, this notion of stochastic stability captures the concept that the expected distance (square) between  trajectories of (\ref{system_feedback_noise}) starting from different initial states, but subject to the same sequence of $\xi_n$, $w_n$, and $\gamma_n$ converges to zero exponentially, for any input $w_n$ and almost all initial conditions. To formally state the definition of mean square exponential incremental stability, we define following system with and without additive noise term
\begin{equation}
{\cal S}=\left\{\begin{array}{ccl}x_{n+1}&=&f(x_n)+\xi_n Bk(x_n)+\xi_n B w_n+ \gamma_n\\
x_{n+1}&=&f(x_n)+\xi_n Bk(x_n)+\xi_n B w_n\end{array}\right.\label{big_system}.
\end{equation}
\begin{definition}[Mean-square exponential incrementally stable]\label{definition_stability} 
The system ${\cal S}$ in (\ref{big_system}) is said to be mean
square exponential incremental stable if, 
for almost all Lebesgue measure initial conditions $x_0$ , $y_0\in  \mathbb{R}^N$ and in particular for $x_0 = 0$, for any $w_n$ including $w_n\equiv 0$, and with and without additive noise term $\gamma_n$ , there exist positive constants, $K < \infty$ and $\beta < 1$ such that,
\begin{eqnarray}
E_{{\cal X}_0^n}\left[ \parallel x_{n+1}-y_{n+1}\parallel^2\right]\leq K \beta^n \parallel x_0 -y_0\parallel^2\;\;\;\;\forall n\geq 0.\label{incremental_stable}
\end{eqnarray}
holds true for the trajectories of (\ref{big_system}). The trajectories $\{x_n\}$ and $\{y_n\}$, in \ref{incremental_stable}, start from different initial conditions, but are subjected to the same sequence of $w_n$ and random variables, $\xi_n, \gamma_n$.   The notation $E_{{\cal X}_0^n}[\cdot]$ stands for expectation taken over the sequence(s) of ${\cal X}_0^n$, which is either equal to $\{\xi_0^n , \gamma_0^n \}$ or $\{\xi_0^n\}$ depending upon if there is additive noise term, $\gamma_n$, or not respectively. 
\end{definition}
\begin{remark}  By requiring that (\ref{incremental_stable}) holds true for both the cases of with and without additive noise term and for arbitrary $w_n$ we are implicitly assuming that the presence of additive input does not have stabilizing or destabilizing effect on the mean-square exponential incremental stability of system (\ref{system_feedback_noise}).
\end{remark}

\noindent It is of interest to know what implication does incremental mean square exponential stability has on the boundedness of state space trajectories. Towards this we have following theorem
\begin{theorem}\label{theorem_secondmoment} Consider the following system
\begin{eqnarray}
x_{n+1}=f(x_n)+\xi_n B k(x_n)+ \gamma_n\label{system_feedback_noise2}
\end{eqnarray}
If system (\ref{big_system}) is mean square exponentially incremental stable then
system (\ref{system_feedback_noise2}) is second moment bounded i.e.,
\[E_{\xi_0^n\gamma_0^n}[\parallel x_{n+1}\parallel^2]\leq \chi(x_0)<\infty\]
where $\chi$ is function of initial condition $x_0$ and the distribution of $\xi$ and constant $C$ bounding the variance of $\gamma$.
\end{theorem}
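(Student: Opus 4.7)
The plan is to invoke mean-square exponential incremental stability twice: once to remove the dependence on the initial condition by comparing $\{x_n\}$ with a reference trajectory started at the origin, and once more to bound the second moment of that reference trajectory by a telescoping decomposition along the noise history.

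\textbf{Step 1 (peel off the initial condition).} Let $\{x_n\}$ solve (\ref{system_feedback_noise2}) with initial condition $x_0\in\mathbb{R}^N$, and let $\{y_n\}$ solve the same recursion started from $y_0=0$ and driven by the same realization of $\{\xi_n,\gamma_n\}$. Definition~\ref{definition_stability} is in particular asserted at $y_0=0$, so
\[
E_{\xi_0^n\gamma_0^n}\bigl[\|x_{n+1}-y_{n+1}\|^2\bigr]\le K\beta^n\|x_0\|^2.
\]
By Minkowski's inequality, $\bigl(E[\|x_{n+1}\|^2]\bigr)^{1/2}\le\bigl(E[\|x_{n+1}-y_{n+1}\|^2]\bigr)^{1/2}+\bigl(E[\|y_{n+1}\|^2]\bigr)^{1/2}$, so it suffices to bound $E[\|y_n\|^2]$ uniformly in~$n$.

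\textbf{Step 2 (telescope along the noise).} For each integer $m\in\{0,\dots,n\}$, define $\{y_k^{(m)}\}_{k\ge m}$ to be the trajectory that obeys the same recursion as $\{y_k\}$, driven by the common $\{\xi_k,\gamma_k\}_{k\ge m}$, but reinitialized to $0$ at time $m$. Then $y_k^{(0)}=y_k$ and $y_n^{(n)}=0$. At time $m+1$ one has $y_{m+1}^{(m)}=\gamma_m$ and $y_{m+1}^{(m+1)}=0$; from then on the two trajectories obey identical dynamics with the common noise. Applying incremental stability after a time-shift (justified by the IID structure of $\{\xi_n,\gamma_n\}$) conditionally on $\gamma_m$, and then integrating using $E[\|\gamma_m\|^2]\le C$, one obtains
\[
E\bigl[\|y_n^{(m)}-y_n^{(m+1)}\|^2\bigr]\le \tilde K C\,\beta^{n-m-1},
\]
for a constant $\tilde K$ proportional to $K$.

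\textbf{Step 3 (sum up).} From the telescoping identity $y_n=\sum_{m=0}^{n-1}\bigl(y_n^{(m)}-y_n^{(m+1)}\bigr)$ and Minkowski's inequality,
\[
\bigl(E[\|y_n\|^2]\bigr)^{1/2}\le\sqrt{\tilde K C}\sum_{m=0}^{n-1}\beta^{(n-m-1)/2}\le\frac{\sqrt{\tilde K C}}{1-\sqrt{\beta}}.
\]
Combining with Step~1 gives $E[\|x_{n+1}\|^2]\le 2K\|x_0\|^2+\frac{2\tilde K C}{(1-\sqrt{\beta})^2}=:\chi(x_0)$, as required.

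The most delicate point is the second application of incremental stability in Step~2: the initial state at time $m+1$ for the pair $(y_n^{(m)},y_n^{(m+1)})$ is the random variable $\gamma_m$, while Definition~\ref{definition_stability} supplies the contraction bound only for Lebesgue-a.e. initial condition (and specifically for $0$). Provided the law of $\gamma_m$ is absolutely continuous with respect to Lebesgue measure, the bound holds almost surely in $\gamma_m$ and the subsequent expectation is legitimate; an analogous routine remark is needed to shift the time-zero bound (\ref{incremental_stable}) to a bound starting at time $m+1$ via the IID property of $\{\xi_n,\gamma_n\}$. Modulo these measurability considerations, the argument is otherwise a straightforward two-step application of the incremental contraction.
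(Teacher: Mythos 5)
Your argument is correct in substance but takes a genuinely different route from the paper's. The paper first observes that $f(0)=k(0)=0$ makes the noiseless closed loop $x_{n+1}=T(x_n,\xi_n):=f(x_n)+\xi_nBk(x_n)$ mean-square exponentially stable toward the origin, then constructs an explicit stochastic Lyapunov function $V(x)=\|x\|^2+\sum_{k\ge 1}E[\|x_k\|^2]$, establishes two-sided quadratic bounds, the one-step contraction $E_{\xi}[V(T(x,\xi))]\le\alpha V(x)$, and the linear gradient bound $\|\partial V/\partial x\|\le c_3\|x\|$ (this last step uses the uniform Jacobian bounds of Assumptions \ref{assumption_1} and \ref{assumption_controller} together with H\"older), and finally absorbs the additive noise via a mean-value expansion of $V$ about $T(x_n,\xi_n)$ followed by an AM--GM estimate, yielding $E[V(x_{n+1})]\le\beta_2V(x_n)+Q$ and the uniform second-moment bound by iteration. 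You instead invoke the incremental contraction directly twice: once to reduce to the trajectory started at the origin, and once, after a time shift, on the pair of initial conditions $(\gamma_m,0)$ to control each term of the telescoping decomposition of the zero-initialized trajectory over the noise-injection times, finishing with a geometric series in $L^2$. Your route is shorter and sidesteps the term-by-term differentiation of an infinite series implicit in the paper's gradient bound for $V$; what it costs is exactly what you flag, namely applying the contraction at the random initial condition $\gamma_m$ (requiring either absolute continuity of the law of $\gamma$ or uniform constants $K,\beta$ valid at every admissible initial pair) and the time-shifted form of Definition \ref{definition_stability} justified by the IID structure. These caveats are of the same character as the almost-everywhere liberties the paper itself takes (its contraction $E_\xi[V(T(x,\xi))]\le\alpha V(x)$ is likewise evaluated at the random state reached after noise perturbations), so I would not count them as a gap, but they should be stated as standing assumptions if your proof were to replace the paper's.
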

The proof of this theorem is provided in the Appendix \ref{appendix}.

\section{Motivating example}\label{section_motivation}
This section is introduced to motivate the problem set-up along with stability definitions and type of results we discover in this paper. We consider the following example of discrete-time Lorentz system with single input.
\begin{equation}
\begin{pmatrix}x_{n+1}\\y_{n+1}\end{pmatrix}=\begin{pmatrix}(1+\alpha \beta)(x_n+\sqrt{\alpha})-\beta (x_n+\sqrt{\alpha}) (y_n+\alpha) -\sqrt{\alpha}\\(1-\beta) (y_n+\alpha)+\beta (x_n+\sqrt{\alpha})^2-\alpha\end{pmatrix}+\xi_n\begin{pmatrix}1\\0\end{pmatrix} u_n+\gamma_n=\tilde F_s(z_n)+\xi_n B u_n+\gamma_n \label{lorentz_sys}
\end{equation}
where $z_n:=(x_n,y_n)$ are the states of the system, $u_n$ is the control input,  $\xi_n$ is the channel uncertainty, and $\gamma_n$ is Gaussian noise. The dynamics of the uncontrolled Lorentz system  for the parameter values of $\alpha=1.25$ and $\beta=0.75$ is shown in Fig. \ref{sim_section2}a. The dynamics  consists of a chaotic attractor and unstable equilibrium point at the origin. Trajectories starting from any two initial conditions on the attractor set exponentially diverge and the rate of expansion is given by the positive Lyapunov exponent. While the instability on the attractor set is captured by the positive Lyapunov exponent, the local instability near the unstable equilibrium point at the origin is captured by the eigenvalue of the linearization. The open loop dynamics of the Lorentz system is not atypical. In fact, the dynamics of one of the widely studied inverted pendulum with time-periodic forcing also consist of an unstable equilibrium point sitting inside a chaotic attractor.

There are two competing instabilities the controller needs to work against to stabilize the origin. The global instability on the attractor set and local instability at the origin. The limitation for  stabilization depends upon which of these two instabilities are more dominant. In fact, we show in Section \ref{section_simulation}, that the parameter values, $\alpha$ and $\beta$, can be chosen so that the global instability captured by the positive Lyapunov exponent is either larger or smaller than the local instability at the origin.  Our final objective is to provide analytical conditions that express limitations in terms of the instability of the open loop unstable dynamics as captured by the positive Lyapunov exponents and unstable eigenvalues at the origin. Since the Lyapunov exponent is the measure of expansion or contraction of nearby trajectories, this motivates us to consider  mean-square exponential incremental stability as the natural notion of stochastic stability for the closed loop system (Definition \ref{definition_stability}). The nonlinear function $\tilde F(z)$ in Eq. (\ref{lorentz_sys})  grows quadratically as $z\to \infty$ and hence it does not satisfy the Assumption \ref{assumption_1} of uniform bounded Jacobian.  In order to satisfy this assumption we modify the function $\tilde F$ as follows.
\begin{eqnarray}
z_{n+1}=\tilde F(z_n)G_1(z_n)+G_2(z_n)z_n=F(z_n)\label{mlorentz_sys}
\end{eqnarray}
where $G_1(z)=\frac{1}{2}(1+\tanh(k(M-\parallel z\parallel))$, and $G_2(z)=L(1+\tanh(k(\parallel z\parallel-M)))$. The functions $G_1$ and $G_2$ are chosen to approximate the sign function. The values of $k,M$, and $L$ are chosen large enough to ensure that the function $G_1(z)$ is one in the region $\parallel z\parallel\leq M$ containing the chaotic attractor of the system $z_{n+1}=\tilde F(z_n)$ and zero outside the region $\parallel z\parallel > M$. Similarly function $G_2(z)z$ is chosen to ensure that $G_2(z)z$ is zero in the region $\parallel z\parallel\leq M$ and is equal to $L z$ outside $\parallel z\parallel>M$. The values of $k,L$, and $M$ are chosen to be equal to $k=100, L=50$, and $ M=100$. With these values of $k,L$, and $M$ it can be shown the dynamics of system (\ref{mlorentz_sys}) is identical to that of (\ref{lorentz_sys}) in the region containing the chaotic attractor.

To discover the limitation results for the Lorentz system and further motivate the mean square exponential incremental stability definition, we consider the following coupled Lorentz system with one way coupling.
\begin{eqnarray}
w_{n+1}&=&F_m(w_n)\label{master}\\
z_{n+1}&=&F_s(z_n)+\xi_n B(u(z_n)-u(w_n))+\gamma_n\label{slave}
\end{eqnarray}
where $w_n\in \mathbb{R}^2$ is the state of another Lorentz system. With one way coupling from  equation (\ref{master}) to system (\ref{slave}) with the term $u(w_n)$, the system Eq. (\ref{slave}) is exactly in the same form as Eq. (\ref{system_feedback_noise}) in our problem formulation.  The goal is to mean square exponentially incrementally stabilize the system Eq. (\ref{slave}) and show that limitation for stabilization arise from the open loop dynamics, $z_{n+1}=F_s(z_n)$, and is independent of the additive forcing term $u(w_n)$ obtained using two different cases namely $F_m=F_s$ and $F_m\neq F_s$. For the case when $F_s=F_m$, the system equations (\ref{master})-(\ref{slave}) are in standard master-slave configuration, where mean square exponential incremental stability of (\ref{slave}) implies that the dynamics of slave system (\ref{slave}) is synchronized to that of master system (\ref{master}). This highlights the importance of the incremental stochastic stability definition for problems involving synchronization with more general network configuration and uncertainty in interaction.
 Since (\ref{master}) is another Lorentz system, the case $F_s\neq F_m$ is obtained by choosing two different set of parameter values for $(\alpha,\beta)$ from Eq. (\ref{lorentz_sys}). The parameters for system (\ref{master}) will be denoted by $(\alpha_m,\beta_m)$ and that for (\ref{slave}) system will be denoted by $(\alpha_s,\beta_s)$. In the following, we present simulation results to verify the limitations results  for system (\ref{slave}) for two different cases of $F_m=F_s$ and $F_m\neq F_s$. The main contribution of this paper is to provide rigorous proofs that help explain these simulation results.

\begin{figure}[h] \centering \subfigure[]
{\includegraphics[width=2.2in]{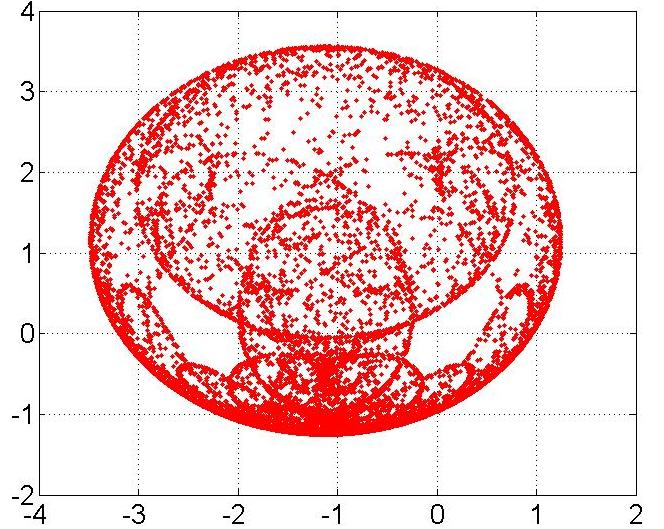}}\subfigure[]
{\includegraphics[width=2.2in]{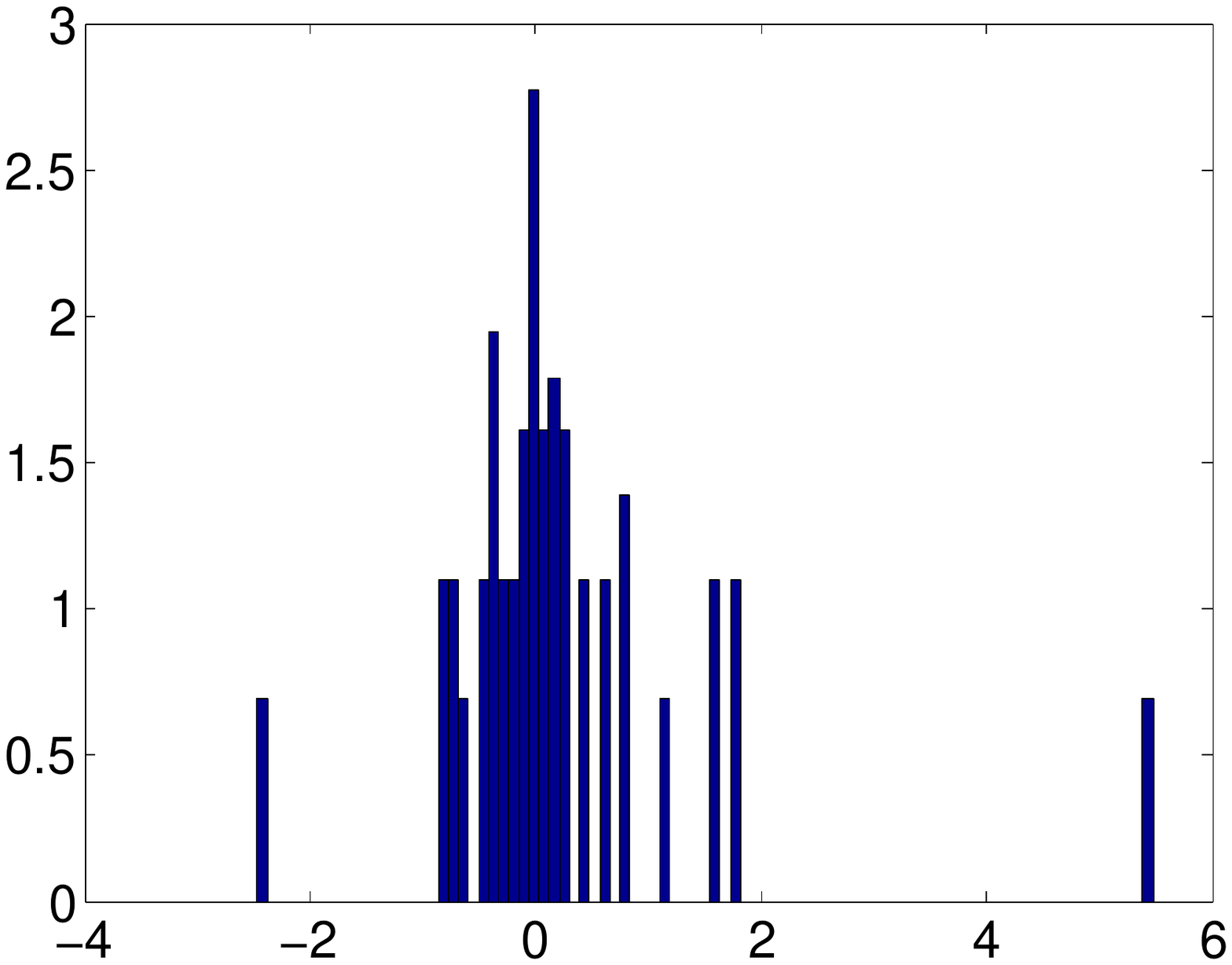}}\subfigure[]
{\includegraphics[width=2.2in]{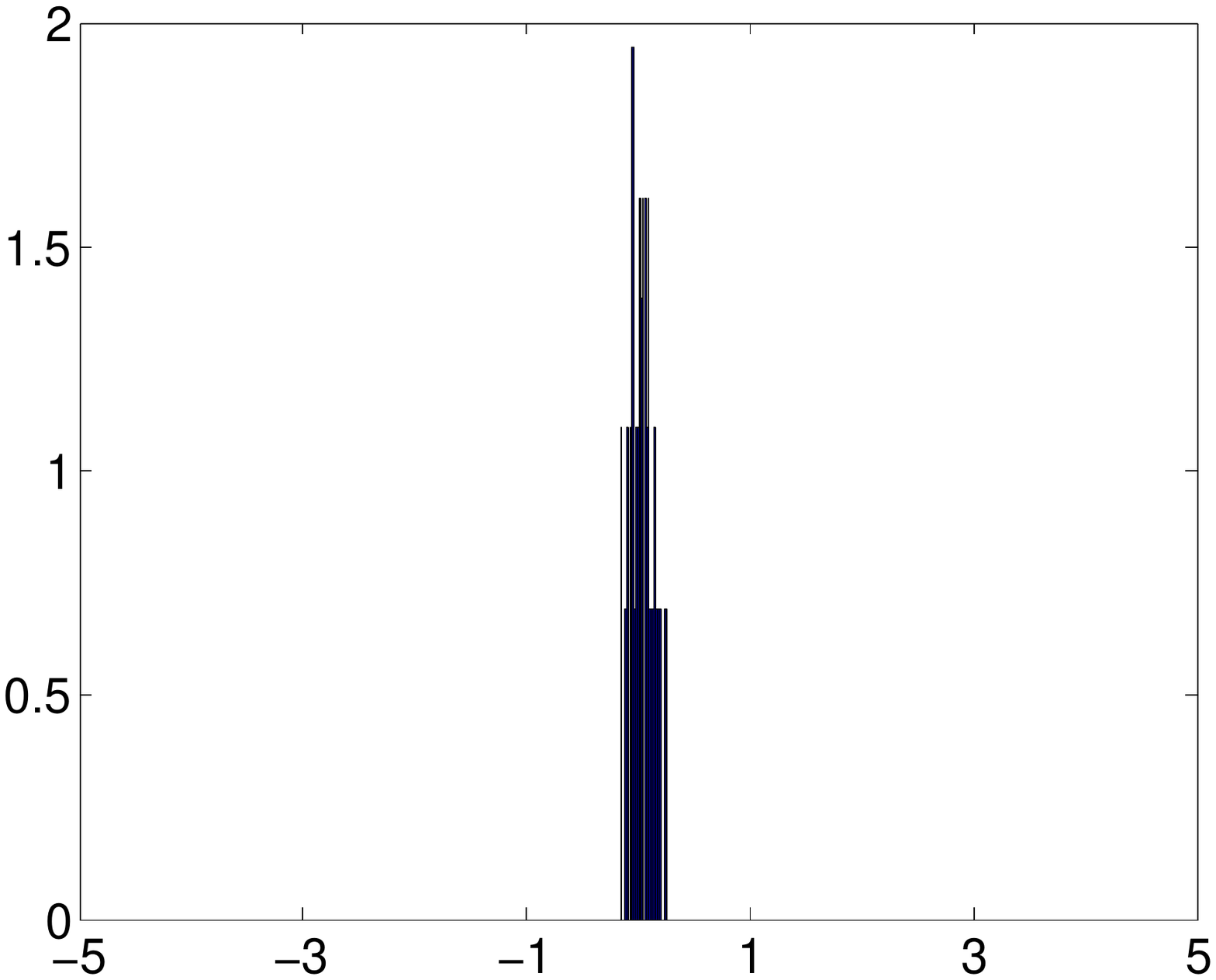}}
\caption{ a) Chaotic attractor  for uncontrolled Lorentz system for parameter value of $\alpha=1.25$ and $\beta=0.75$; b) Histogram for the error dynamics between two trajectories for non-erasure probability of $p=0.55$; c) Histogram for the error dynamics between two trajectories for non-erasure probability of $p=0.9$.}
\label{sim_section2}
\end{figure}
 \subsection{Simulations}
For the simulation we choose the parameter value of  $(\alpha_s,\beta_s)=(1.25,0.75)$. For these parameter values, the local instability at the equilibrium point is dominant over the global instability captured by positive Lyapunov exponents. The stabilizing feedback controller is designed to cancel the nonlinearity of the $x_{n+1}$ dynamics. We assume erasure channel uncertainty model for random variable $\xi_n$ with $Prob(\xi_n=1)=p$ and $Prob(\xi_n=0)=1-p$. Hence, $\mu=p$ and $\sigma^2=p(1-p)$ (Refer to Eq. (\ref{uncertain_statistics})) defines the statistics of the channel uncertainty. The random variable $\gamma_n$ is assumed to be zero mean and variance of $0.1^2$.

{\it Case $F_s\neq F_m$}: For this case the parameter value for system (\ref{master}) are chosen to be $(\alpha_m,\beta_m)=(2.25,0.29)$.
In Figs. \ref{sim_section2}b and \ref{sim_section2}c, we show the histogram for the error between the two trajectories for the system (\ref{slave}) for the non-erasure probability of $p=0.55$ and $p=0.9$ respectively. The histogram are obtained by averaging the error dynamics over $3000$ different realization of channel uncertainty random variable $\xi_n$.

  {\it Case $F_s=F_m$}: For this case the parameter values for both systems (\ref{master}) \& (\ref{slave}) are identical  i.e., $(\alpha_m,\beta_m)=(\alpha_s,\beta_s)=(1.25,0.75)$. For this case system (\ref{master})-(\ref{slave}) are in master-slave configuration.  The mean square exponentially incremental stability of slave system will imply that the slave system will synchronize its dynamics to that of master system.  The synchronization between master and slave system follows from the structure of the system equations (\ref{master}) and (\ref{slave}). It is clear that one of the trajectories of the slave system is the same as that of master system (i.e., $z_n=w_n \forall n$, if $z_0=w_0$). Hence, if the error between any two trajectories of the slave system approaches zero it implies that the slave system is tracking the master system. In Figs. \ref{sim_section2b}a \& \ref{sim_section2b}c, we plot the histogram for the error between the two trajectories for system (\ref{slave}).
From Figs. \ref{sim_section2}(b,c) \& Figs. \ref{sim_section2b}(a,b), we notice that the wider support for the histogram for the non-erasure probability of $p=0.55$ is indicative of the fact that the the error dynamics have more variance around zero compared to the case of $p=0.9$.

\begin{figure}[h] \centering \subfigure[]
{\includegraphics[width=2.2in]{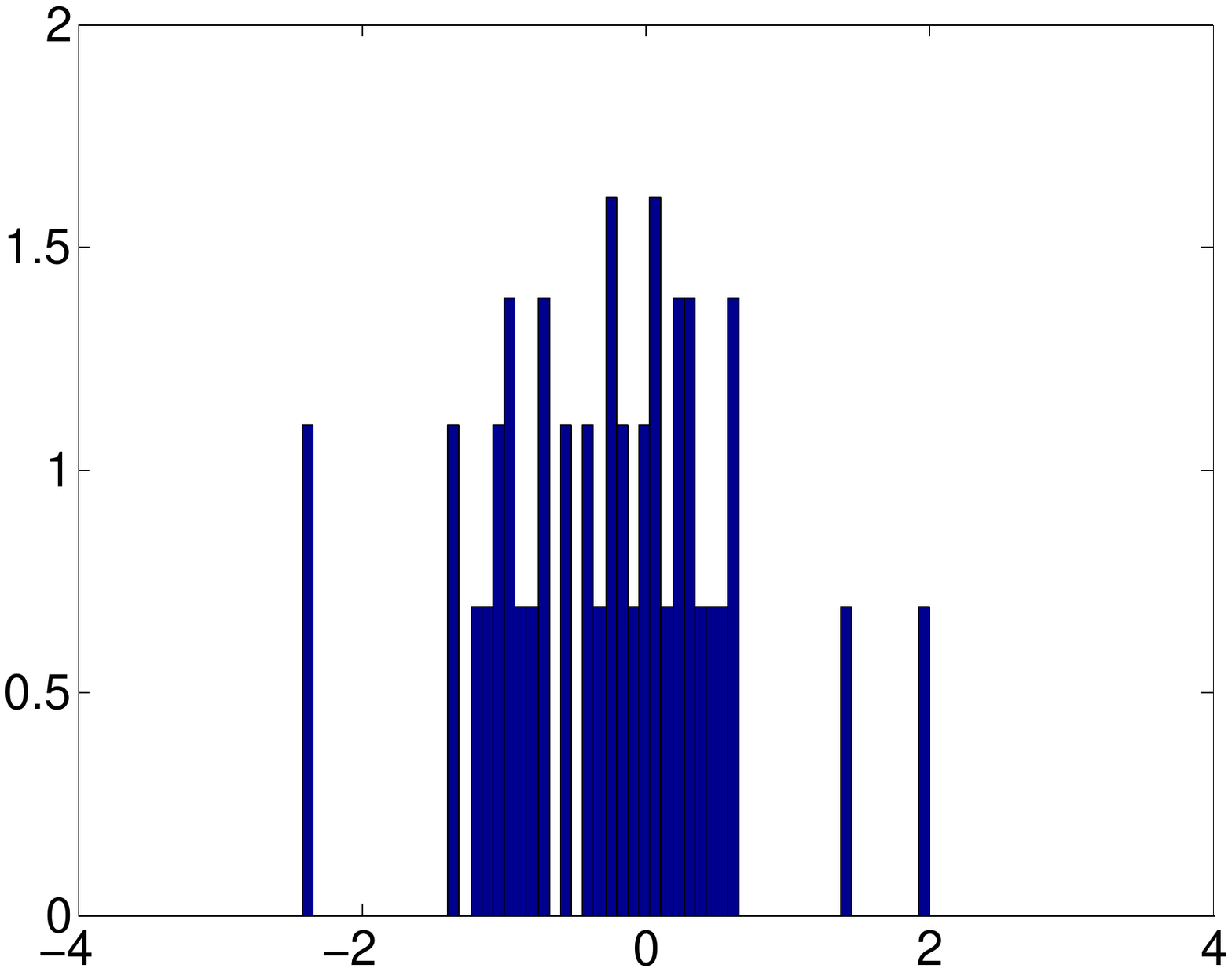}}\subfigure[]
{\includegraphics[width=2.2in]{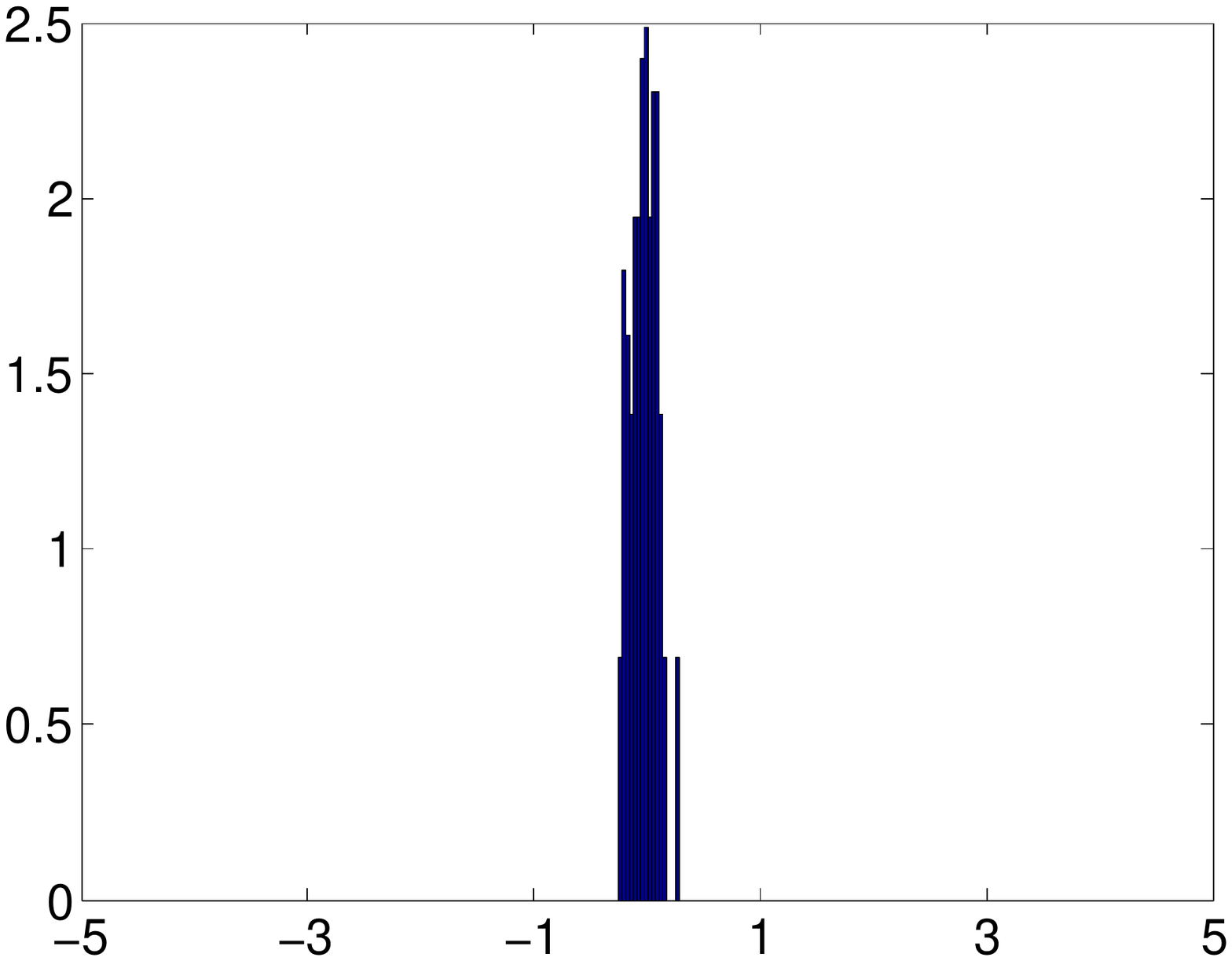}}\subfigure[]
{\includegraphics[width=2.2in]{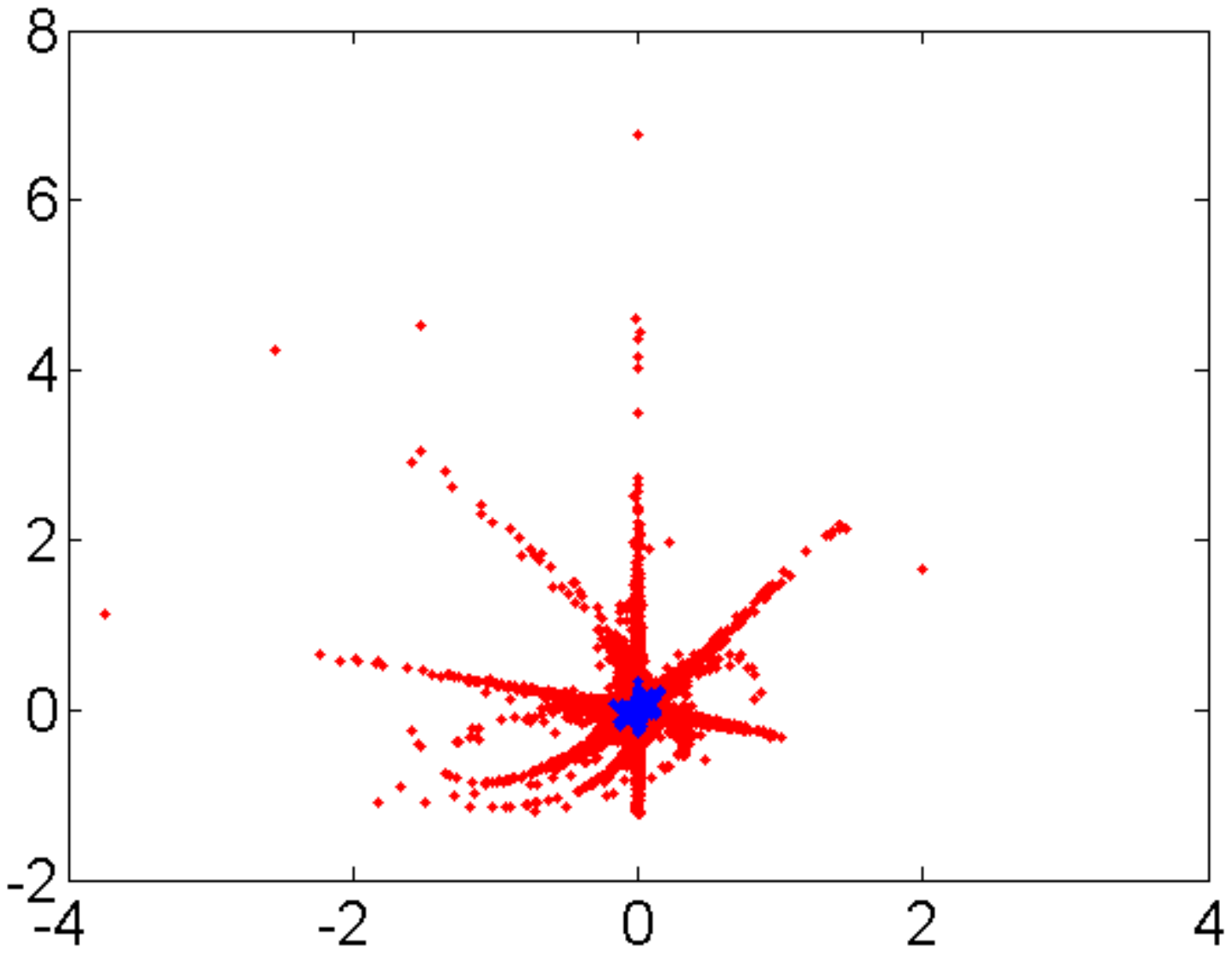}}
\caption{ a) Histogram for the error dynamics between two trajectories for non-erasure probability of $p=0.55$; b) Histogram for the error dynamics between two trajectories for non-erasure probability of $p=0.9$. c) Comparison of two attractors sets for $p=0.65$ (red) and $p=0.9$ (blue).}
\label{sim_section2b}
\end{figure}


From the simulation results presented in Figs. (\ref{sim_section2}) (b,c) \& (\ref{sim_section2b}) (a,b), we conclude that the irrespective of whether $F_m=F_s$ or $F_m\neq F_s$, the  error dynamics has larger variance for non-erasure probability below $p=0.55$. During this time, the error dynamics is away from zero, loosely speaking, the system trajectories wander on the chaotic attractor. This behavior is sensitive to the presence of additive noise. In particular,  additive noise  will prevent the error dynamics from converging to zero, thereby manifesting the mean-square instability of the error dynamics in the error trajectories. In Fig. \ref{sim_section2b}c, we plot the attractor set for the slave dynamics (i.e., Eq. (\ref{slave})) without the coupling from the master system (i.e., $u(w_n)=0$) for two different values of non-erasure probabilities of $p=0.65$ and $p=0.9$.
Comparison of the two attractor sets reveals that attractor set for $p=0.65$ is chaotic, while for $p=0.9$, the attractor is tame with support close to the origin.

The objective of this paper is to provide a rigorous framework that will allow us to distinguish the mean-square stable and unstable behaviors of the error dynamics. In fact, using the main results of this paper in section \ref{section_main} it can be shown that  for the parameter value of $\alpha=1.25$ and $\beta=0.75$ used for the simulation, the critical non-erasure probability below, which the error dynamics for the Lorentz system (\ref{slave}), is mean-square unstable is $p^*=0.63$ (or equivalently $\rho^*=\frac{\mu^2}{\mu^2+\sigma^2}=0.63$). We will revisit this example with more simulation results later in the simulations Section \ref{section_simulation}. We will show that  for parameter values of $(\alpha_s,\beta_s)=(2.25,0.29)$, where the global instability captured by the positive Lyapunov exponent is dominant over the local instability, the limitations and hence the critical non-erasure probability is  determined based on global instability.


\section{Fundamental limitations for incremental stabilization}\label{limitation_section}
The main result of this paper proves the mean-square exponential incremental stabilization of the networked system requires certain minimal QoS from the network. Next, we outline the various steps involved in the proof of this main result.
%
\begin{enumerate}
\item First, we show a necessary condition for the mean-square incremental exponential stability of (\ref{big_system}) can be expressed  in terms of the mean-square exponential stability of its linearization  along the system trajectory (Theorem \ref{linearization}).
%
\item  Next, the necessary condition for the mean-square exponential stability for the linearized system is based on the Lyapunov analysis.

\item Finally, the optimal control derived using Lyapunov analysis is used to prove the main results on the mean-square exponential incremental stability.

\end{enumerate}

\begin{theorem}\label{linearization} Consider the following system dynamics.
\begin{eqnarray}
\eta_{n+1}&=&\left(\frac{\partial f}{\partial x}(x_n)+ \xi_n B \frac{\partial k}{\partial x}(x_n)\right)\eta_n\label{linearized_system}\\
x_{n+1}&=&f(x_n)\label{nominal}.
\end{eqnarray}
Let system (\ref{big_system}) be mean-square exponential incremental stable then, the linearized dynamics (\ref{linearized_system}) is mean-square exponential stable, i.e., there exist positive constants $K_2<\infty$ and $\beta_2<1$, such that,
\[E_{\xi_0^n}\left[\parallel \eta_{n+1}\parallel^2\right]\leq K_2\beta_2^n \parallel \eta_0\parallel^2\;\;\;\;\forall n\geq 0,\]
for Lebesgue almost all initial conditions $x_0\in \mathbb{R}^N$ and $x_0=0$.
\end{theorem}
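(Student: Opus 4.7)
The plan is to derive the mean-square stability of the variational equation (\ref{linearized_system}) by specializing the incremental stability hypothesis to pairs of trajectories of (\ref{big_system}) that are infinitesimally separated at time zero, using an exogenous input chosen so that the unperturbed trajectory coincides with the open-loop iterate $\bar x_n := f^n(x_0)$. The key observation is that the linearization of the closed-loop map around any such trajectory is exactly (\ref{linearized_system}), so incremental stability, when restricted to a first-order perturbation around this path, forces the desired bound on $\eta_n$.

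Concretely, I would fix $x_0$ (either $x_0=0$ or an $x_0$ in the full-measure set provided by the hypothesis), set $\gamma_n\equiv 0$, and choose the deterministic exogenous input $w_n := -k(\bar x_n)$, which is admissible because $\bar x_n$ depends only on $x_0$. A direct induction shows that the solution of (\ref{big_system}) starting from $x_0$ under this input and any realization $\{\xi_n\}$ is exactly $\bar x_n$, since the stochastic term $\xi_n B(k(\bar x_n)+w_n)$ vanishes at each step. For a perturbed initial condition $y_0 = x_0 + \epsilon\eta_0$ with $\epsilon>0$ small, under the same $w_n$, $\gamma_n=0$, and realization of $\xi$, mean-square exponential incremental stability yields
\[
E_{\xi_0^n}\!\left[\|y_n-\bar x_n\|^2\right]\;\leq\; K\beta^n\,\epsilon^2\,\|\eta_0\|^2.
\]
The $C^1$ regularity of $f$ and $k$ with uniformly bounded Jacobians (Assumptions \ref{assumption_1} and \ref{assumption_controller}) makes the sample-wise solution map $y_0\mapsto y_n$ differentiable at $y_0=x_0$, and its directional derivative along $\eta_0$ is precisely the solution $\eta_n$ of (\ref{linearized_system}) along $\bar x_n$. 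Dividing by $\epsilon^2$ and letting $\epsilon\downarrow 0$ under the expectation then delivers $E_{\xi_0^n}[\|\eta_n\|^2]\leq K\beta^n\|\eta_0\|^2$.

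The main technical obstacle is interchanging the $\epsilon\to 0$ limit with the expectation. Writing $y_n-\bar x_n = \epsilon\eta_n + R_n(\epsilon,\xi_0^{n-1})$, a pathwise Gronwall argument based on the uniform bound on the Jacobians gives $\|y_n-\bar x_n\|\leq C_n(\xi)\,\epsilon$ for $\epsilon$ small enough, where $C_n(\xi)$ is a polynomial in $\sum_{k<n}|\xi_k|$ and hence lies in $L^2$ since $\xi_k$ has finite second moments. This furnishes the dominating function needed to apply dominated convergence to $R_n(\epsilon,\xi)/\epsilon\to 0$ for each fixed $n$. A minor bookkeeping point is that $\bar x_n$, the input $w_n$, and the resulting bound all depend on $x_0$, so the conclusion holds over exactly the same set of initial conditions as in the hypothesis, namely $x_0=0$ and a.e.\ $x_0\in\mathbb{R}^N$.
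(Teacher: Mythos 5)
Your proposal is correct and follows essentially the same route as the paper: the paper also sets the exogenous input to $w_n=-k(z_n)$ with $z_{n+1}=f(z_n)$ so that the nominal closed-loop trajectory is the open-loop one, perturbs the initial condition, shrinks the perturbation to zero, and passes the limit inside the expectation. The only (minor) difference is in that last step: the paper writes the increment via the integral mean value theorem and uses dominated convergence on the integrand followed by Fatou's lemma (which suffices since the quadratic form is nonnegative), whereas you build an explicit $L^2$ dominating function from the uniform Jacobian bounds and the finite second moment of $\xi$ and invoke dominated convergence directly.
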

We provide the proof of this Theorem in Appendix \ref{appendix}.


Now, we provide the Lyapunov-based necessary condition for the mean-square exponential stability of a linearized system,
\begin{subequations}
\begin{equation}
\eta_{n+1}=\left(\frac{\partial f}{\partial x}(x_n)+\xi_n B \frac{\partial k}{\partial x}(x_n)\right)\eta_n\label{linearized_a}
\end{equation}
\begin{equation}
x_{n+1}=f(x_n).\label{linearized_b}
\end{equation}\label{linear}
\end{subequations}
\begin{theorem}\label{Lya_theorem}A necessary condition for the linearized system (\ref{linear}), with controller mapping $k$ satisfying Assumption \ref{assumption_controller}, to be mean-square exponentially stable is  there exist positive constants, $\alpha_1$ and $\alpha_2$, and a matrix function of $x$, $P(x)$, such that, $\alpha_1 I\leq P(x)\leq \alpha_2 I$ and
\begin{eqnarray}
E_{\xi_\ell}\left[ {\cal A}^{'}(x_\ell,\xi_\ell)P(x_{\ell+1}){\cal A}(x_\ell,\xi_\ell)\right]< P(x_\ell), \label{stability_cond}
\end{eqnarray}
for almost all with respect to the Lebesgue measure, $x_0\in \mathbb{R}^N$ and $x_0=0$, where ${\cal A}(x_\ell,\xi_\ell):=\frac{\partial f}{\partial x}(x_\ell)+\xi_\ell B \frac{\partial k}{\partial x}(x_\ell)$ and $x_{\ell+1}=f(x_\ell)$.

\end{theorem}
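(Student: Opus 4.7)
The plan is to prove this converse Lyapunov statement by \emph{constructing} $P(x)$ explicitly as an infinite sum of expected Gramians of the random transition matrix along the deterministic nominal trajectory $x_{n+1}=f(x_n)$, and then verifying the three required properties (uniform upper and lower bounds and the strict one-step decrease) using the mean-square exponential bound of Theorem \ref{linearization} together with the IID structure of $\{\xi_n\}$.

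First I would introduce the random transition matrix along the trajectory of (\ref{linearized_b}) started at $x_0=x$,
\[
\Psi(n;x):=\mathcal{A}(x_{n-1},\xi_{n-1})\cdots \mathcal{A}(x_0,\xi_0),\qquad \Psi(0;x):=I,
\]
so that any solution of (\ref{linearized_a}) satisfies $\eta_n=\Psi(n;x_0)\eta_0$. I would then take as candidate Lyapunov matrix
\[
P(x):=\sum_{n=0}^{\infty}E_{\xi_0^{n-1}}\!\left[\Psi(n;x)'\,\Psi(n;x)\right],
\]
i.e.\ the series whose $n$th term equals $E[\eta_n\eta_n']$ (with $\eta_0$ treated as a free direction). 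If this converges in the operator norm uniformly in $x$, then it is automatically symmetric and positive semidefinite.

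Second, for the uniform sandwich bound, the lower bound $P(x)\succeq I$ is immediate from the $n=0$ term, giving $\alpha_1=1$. For the upper bound, the identity $v'E[\Psi(n;x)'\Psi(n;x)]v = E_{\xi_0^{n-1}}[\|\eta_n\|^2]$ with $\eta_0=v$, combined with the hypothesis $E_{\xi_0^{n-1}}[\|\eta_n\|^2]\le K_2\beta_2^{n-1}\|v\|^2$ of Theorem \ref{linearization}, yields $v'P(x)v\le (1+K_2/(1-\beta_2))\|v\|^2$. Here the key point I must leverage is that the constants $K_2$ and $\beta_2$ in the MSE hypothesis are \emph{independent} of the nominal initial condition, which is exactly what Definition \ref{definition_stability} and Theorem \ref{linearization} guarantee on the same set of admissible $x_0$'s (Lebesgue almost everywhere and at the origin).

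Third, for the strict Lyapunov decrease I would exploit the factorization $\Psi(n+1;x)=\tilde{\Psi}(n;f(x))\,\mathcal{A}(x,\xi_0)$, where $\tilde\Psi(n;f(x))$ is the analogous product started at $f(x)$ but built from the shifted noise sequence $\xi_1,\ldots,\xi_n$. Conditioning on $\xi_0$, using the IID property to identify the distribution of $\tilde\Psi(n;f(x))$ with that of $\Psi(n;f(x))$, and reindexing the sum, one obtains
\[
E_{\xi_0}\!\left[\mathcal{A}(x,\xi_0)'\,P(f(x))\,\mathcal{A}(x,\xi_0)\right]=P(x)-I,
\]
which applied at each $x_\ell$ along the nominal trajectory is precisely (\ref{stability_cond}) with strict inequality margin $I$. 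The main technical obstacle I anticipate is not algebraic but measure-theoretic: one must make the ``almost every $x_0$'' statement propagate through the construction and through the shift by $f$, which requires invoking Assumption \ref{assumption_1} (invertibility and uniform boundedness of $\partial f/\partial x$) so that Lebesgue-null sets of initial conditions are preserved under the dynamics and the pointwise series definition of $P(x)$ is consistent with the uniform bounds at $x_0=0$ and on the full-measure set where the MSE hypothesis holds.
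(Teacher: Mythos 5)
Your proposal is correct and follows essentially the same route as the paper: the paper also constructs $P$ as the infinite sum of expected Gramians of the products of ${\cal A}(x_k,\xi_k)$ along the nominal trajectory, bounds it above using the mean-square exponential decay from Theorem \ref{linearization}, and obtains the strict decrease by telescoping the sum. The only cosmetic difference is that you include the $n=0$ identity term, which gives you $\alpha_1=1$ and a decrement of exactly $I$ directly, whereas the paper starts the sum at the one-step Gramian and invokes Assumption \ref{assumption_controller} for the lower bound and the strict inequality.
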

\begin{proof}
Consider the following construction of $P(x)$.
\begin{eqnarray}
P(x_\ell):=\sum_{n=\ell}^{\infty}E_{\xi_{\ell}^n}\left[
\left(\prod_{k=\ell}^n {\cal A}(x_k,\xi_{k})\right)^{'}\left(\prod_{k=\ell}^n {\cal A}(x_k,\xi_k)\right)\right]. \label{formula}
\end{eqnarray}
From the above construction of $P(x_\ell)$ and making use of Assumption \ref{assumption_controller} it follows $P(x_\ell)$ satisfies the inequality (\ref{stability_cond}). Furthermore, since system (\ref{linear}) is assumed mean-square exponentially stable, we know there exist positive constants $K<\infty$ and $\beta<1$, such that
\[E_{\xi_0^{\ell}}\left[\parallel {\cal A}(x_\ell,\xi_\ell)\cdots {\cal A}(x_0,\xi_0)\parallel^2 \right]\leq K_2 \beta_2^{ \ell}.\]
Hence, there exists a positive constant, $\alpha_2$, such that $P(x)\leq \alpha_2 I$. The lower bound on $P(x)$ and the existence of $\alpha_1$ follow from the construction of $P(x)$ and Assumption \ref{assumption_controller}.

\end{proof}

\begin{definition}[Matrix Lyapunov function]\label{def_matrix_Lya} We refer to the matrix function $P(x)$ satisfying the necessary condition (\ref{stability_cond})  of Theorem \ref{Lya_theorem} as a matrix Lyapunov function.
\end{definition}
We now use the matrix Lyapunov function to derive the necessary condition for mean-square exponentail stability.
%
\begin{theorem}\label{theorem_support} The necessary condition for the mean-square exponential stability of a  linearized system (\ref{linear}) is given by
\begin{eqnarray}
A^{'}(x_n)Q_0(x_{n+1})A(x_n)-  \rho A^{'}(x_n) Q_0(x_{n+1}) B\left(B^{'} Q_0(x_{n+1})B\right)^{-1} B^{'}Q_0(x_{n+1}) A(x_n)<Q_0(x_n),\label{necc_condition11}
\end{eqnarray}
for almost all, with respect to the Lebesgue measure initial condition, $x_0\in \mathbb{R}^N$ and for $x_0=0$ and where $\rho=\frac{\mu^2}{\mu^2+\sigma^2}$. The matrix $Q_0(x)$ is the solution of the following Riccati-like equation.
\begin{eqnarray}
&A^{'}(x_n)Q_0(x_{n+1})A(x_n)-  A^{'}(x_n) Q_0(x_{n+1}) B\left( I+B^{'} Q_0(x_{n+1})B\right)^{-1} B^{'}Q_0(x_{n+1}) A(x_n)+R(x_n)\nonumber\\&=Q_0(x_n),\label{riccati}
\end{eqnarray}
for some $R(x_n)\geq 0$, and where $A(x_n):=\frac{\partial f}{\partial x}(x_n)$, $x_{n+1}=f(x_n)$.
\end{theorem}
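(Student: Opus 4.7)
The plan is to derive the necessary condition (\ref{necc_condition11}) from the matrix Lyapunov inequality (\ref{stability_cond}) of Theorem \ref{Lya_theorem} by completing the square in the controller Jacobian $K(x) := \frac{\partial k}{\partial x}(x)$, and then to show that the resulting Lyapunov matrix (suitably rescaled) satisfies the Riccati-like equation (\ref{riccati}) with $R\ge 0$.

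First, writing $\mathcal{A}(x_\ell,\xi_\ell) = A(x_\ell) + \xi_\ell B K(x_\ell)$ with $A(x):=\frac{\partial f}{\partial x}(x)$, and using $E[\xi_\ell]=\mu$, $E[\xi_\ell^2]=\mu^2+\sigma^2$, I would expand
\begin{equation*}
E_{\xi_\ell}[\mathcal{A}' P(x_{\ell+1}) \mathcal{A}] \;=\; A'PA + \mu(A'PBK + K'B'PA) + (\mu^2 + \sigma^2)\, K'B'PBK.
\end{equation*}
Since $B'PB$ is invertible (by $B'B>0$ and $P>0$), I would complete the square in $K$ about $K^* := -\frac{\mu}{\mu^2+\sigma^2}(B'PB)^{-1} B'PA$, obtaining
\begin{equation*}
E_{\xi_\ell}[\mathcal{A}' P \mathcal{A}] \;=\; A'PA - \rho\, A'PB(B'PB)^{-1} B'PA + (\mu^2 + \sigma^2)(K - K^*)' B'PB (K - K^*).
\end{equation*}
The trailing quadratic is positive semidefinite, so the Lyapunov inequality $E_{\xi_\ell}[\mathcal{A}' P \mathcal{A}] < P$ forces
\begin{equation*}
A' P A - \rho\, A'PB(B'PB)^{-1} B'PA \;<\; P,
\end{equation*}
which is exactly (\ref{necc_condition11}) with $Q_0=P$.

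To cast this $Q_0$ in the Riccati form, I would set $Q_0 := c P$ for a scalar $c>0$ to be chosen large. Inequality (\ref{necc_condition11}) is homogeneous of degree one in $Q_0$, so it is preserved. Defining $R$ through (\ref{riccati}) yields
\begin{equation*}
R \;=\; c\bigl[\,P - A'PA + A'PB\,(c^{-1} I + B'PB)^{-1} B'PA\,\bigr].
\end{equation*}
As $c\to\infty$ the bracketed quantity converges to $P - A'PA + A'PB(B'PB)^{-1} B'PA$, and adding the non-negative term $(1-\rho)\, A'PB(B'PB)^{-1} B'PA$ to the strict inequality (\ref{necc_condition11}) shows this limit is strictly positive. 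Combined with the uniform bounds $\alpha_1 I \le P \le \alpha_2 I$ and the bounded Jacobian from Assumption \ref{assumption_1}, the convergence is uniform in $x$, so for all sufficiently large $c$, $R\ge 0$ uniformly and $Q_0 = cP$ solves (\ref{riccati}).

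The most delicate point will be securing the uniform-in-$x$ lower bound on the limit $P - A'PA + A'PB(B'PB)^{-1} B'PA$, so that a single scaling constant $c$ can serve for all $x$. This requires combining the uniform strict gap in (\ref{stability_cond})—which traces back to the explicit construction (\ref{formula}) of $P$ together with Assumption \ref{assumption_controller}—with the uniform invertibility $B'PB \ge \alpha_1\, \lambda_{\min}(B'B)\, I > 0$ inherited from $B'B>0$ and $P\ge \alpha_1 I$. Once this uniformity is in hand, $c$ can be fixed independently of $x$, completing the identification of $Q_0=cP$ as a Riccati solution satisfying (\ref{necc_condition11}).
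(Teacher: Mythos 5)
Your proposal is correct, and its first half coincides with the paper's: completing the square about $K^*=-\frac{\mu}{\mu^2+\sigma^2}(B'PB)^{-1}B'PA$ is exactly the rigorous form of the paper's ``minimize the trace of the left-hand side of (\ref{Expec})'' step, and both yield inequality (\ref{zz}), i.e.\ (\ref{necc_condition11}) with $P$ in place of $Q_0$ (harmless by scale invariance). Where you genuinely diverge is the identification of the Riccati solution. The paper avoids any limiting argument: it notes the exact identity $\rho\,(B'PB)^{-1}=(\Delta_P+B'PB)^{-1}$ with $\Delta_P:=\frac{\sigma^2}{\mu^2}B'PB$, so (\ref{zz}) \emph{is} a Riccati-type inequality with weight $\Delta_P$; the uniform bound $\Delta_P\le\Delta I$ (from $P\le\alpha_2 I$) monotonically relaxes the weight to $\Delta I$, and a single rescaling $Q_0=P/\Delta$ lands directly on (\ref{riccati_Q0_inequality}), from which $R\ge 0$ is immediate. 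Your route instead sets $Q_0=cP$ and sends $c\to\infty$, which works but concentrates all the difficulty in the uniform-in-$x$ positivity of the limiting bracket that you rightly flag as delicate; that uniformity does hold to the same extent the paper's own two-sided bounds on $P$ do (the gap in (\ref{stability_cond}) is uniform via the construction (\ref{formula}) and Assumption \ref{assumption_controller}, and $(c^{-1}I+B'PB)^{-1}\to(B'PB)^{-1}$ at rate $O(1/c)$ uniformly since $B'PB\ge\alpha_1\lambda_{\min}(B'B)I$ and the Jacobians are uniformly bounded), so your argument closes, but the paper's $\Delta_P$ identity reaches the same conclusion with one fixed constant and no limit. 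The two proofs also differ cosmetically at the end: you get boundedness of $Q_0$ from the bounds on $P$, while the paper invokes the uniform complete controllability of Assumption \ref{assumption_2}.
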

\begin{proof} Using the results from Theorem \ref{Lya_theorem}, we know  the necessary condition for the mean-square exponential stability of (\ref{linear}) can be expressed in terms of the existence of a matrix, Lyapunov function $P(x)$, such that the following inequalities are satisfied, $\alpha_1 I\leq P(x)\leq \alpha_2 I$.
\begin{eqnarray}
E_{\xi_n}\left[ \left(A(x_n)+\xi_n B \frac{\partial k}{\partial x}(x_n)\right)^{'}P(x_{n+1})\left(A(x_n)+\xi_n B \frac{\partial k}{\partial x}(x_n)\right)\right]<P(x_n).\label{Expec}
\end{eqnarray}
Taking expectation w.r.t. $\xi_n$,  using the fact that $x_n$ is independent of $\xi_n$, and minimizing the trace of the left-hand side of (\ref{Expec}) with respect to $\frac{\partial k}{\partial x}$, we obtain the following expression for the optimal control, $\frac{\partial k}{\partial x}$, in terms of  $P(x_n)$,
\begin{eqnarray}
\frac{\partial k}{\partial x}(x_n)=-\frac{\mu}{\sigma^2+\mu^2}\left(B^{'} P(x_{n+1})B\right)^{-1}B^{'} P(x_{n+1})A(x_n).\label{optimal_control_tangent}
\end{eqnarray}
Substituting (\ref{optimal_control_tangent}) in (\ref{Expec}), we obtain the following necessary condition for the mean-square exponential stability,
\begin{eqnarray}
A^{'}(x_n)P(x_{n+1})A(x_n)-  \rho A^{'}(x_n) P(x_{n+1}) B\left(B^{'} P(x_{n+1})B\right)^{-1} B^{'}P(x_{n+1}) A(x_n) <P(x_n)\label{zz}.
\end{eqnarray}
where $\rho:=\frac{\mu^2}{\mu^2+\sigma^2}$.
It is important to notice  the above inequality is independent of scaling, i.e, if $P(x_n)$ satisfies the above inequality, then $\gamma P(x_n)$ also satisfies above inequality for any positive constant, $\gamma>0$.
By defining $\Delta_P:=B^{'} P(x_{n+1})B\frac{\sigma^2}{\mu^2}$, we write the above inequality as follows:
\begin{eqnarray}
A^{'}(x_n)P(x_{n+1})A(x_n)-  A^{'}(x_n) P(x_{n+1}) B\left(\Delta_P+B^{'} P(x_{n+1})B\right)^{-1} B^{'}P(x_{n+1}) A(x_n) <P(x_n). \label{inequality}
\end{eqnarray}
Since $P(x_n)$ is matrix Lyapunov function, and hence, bounded below, we know there exists a positive constant $\Delta>0$ such that $\Delta I \geq \Delta_P$. Hence, (\ref{inequality}) implies $P(x_n)$ satisfies the following inequality,
\begin{eqnarray}
A^{'}(x_n)P(x_{n+1})A(x_n)-  A^{'}(x_n) P(x_{n+1}) B\left(\Delta I+B^{'} P(x_{n+1})B\right)^{-1} B^{'}P(x_{n+1}) A(x_n) <P(x_n). \label{inequality_1}
\end{eqnarray}
Now, since (\ref{inequality_1}) is independent of positive scaling, we obtain from (\ref{inequality_1})
\begin{eqnarray}
A^{'}(x_n)Q_0(x_{n+1})A(x_n)-A^{'}(x_n) Q_0(x_{n+1}) B\left( I+B^{'} Q_0(x_{n+1})B\right)^{-1} B^{'}Q_0(x_{n+1}) A(x_n) <Q_0(x_n), \label{riccati_Q0_inequality}
\end{eqnarray}
where $Q_0(x_n):=\frac{1}{\Delta} P(x_n)$. Furthermore (\ref{riccati_Q0_inequality}) implies existence of $R(x_n)\geq 0$, such that following Riccati-like equality is true
\begin{eqnarray}
&A^{'}(x_n)Q_0(x_{n+1})A(x_n)-A^{'}(x_n) Q_0(x_{n+1}) B\left( I+B^{'} Q_0(x_{n+1})B\right)^{-1} B^{'}Q_0(x_{n+1}) A(x_n) \nonumber\\&+R(x_n)=Q_0(x_n). \label{riccati_Q0}
\end{eqnarray}
The necessary condition (\ref{necc_condition11}) then follows from (\ref{zz}). Using the definition, $P(x_n)=\Delta Q_0(x_n)$. The Riccati-like equation (\ref{riccati_Q0}) resembles the Riccati equation obtained in the solution of the linear quadratic regulator problem for the linear time varying (LTV) system \cite{kwakernaak} with the difference that various matrices in (\ref{riccati_Q0}) are parameterized by a state trajectory $\{x_n\}$ of system, $x_{n+1}=f(x_n)$, instead of time. The matrix function $Q_0(x_n)$ is also bounded both above and below. It follows from the uniformly, completely controllability assumption on pair $(f(x),B)$ (Assumption \ref{assumption_2}).
\end{proof}

\section{Main results}\label{section_main}
In this section, we use the result from Theorem \ref{theorem_support} to derive conditions for mean-square exponential incremental stability under various assumptions on system dynamics.
\subsection{Linear system}
\begin{theorem}\label{linear_systems_cond} For the linear time invariant system (i.e., $f(x)=Ax$) with all eigenvalues having an absolute value greater than one, a necessary condition for the mean-square exponential incremental stability with controller satisfying Assumption \ref{assumption_controller}
\begin{itemize}
\item for number of control inputs, $1\leq d<N$, is given by
\begin{eqnarray}
\bar \rho^d\prod_{k=1}^N |\lambda_k|^2<1.\label{linear_cond}
\end{eqnarray}
\item for the number of control inputs equal to the number of states, i.e., $d=N$ and $B$ invertible, is given by 
\begin{eqnarray}
\bar \rho |\lambda_{max}|^2<1,\label{linear_cond_N}
\end{eqnarray}
\end{itemize}
where, $\bar \rho=1-\rho$, $\rho=\frac{\mu^2}{\mu^2+\sigma^2}$, $\lambda_k$ for $k=1,\ldots,N$ and $\lambda_{max}$ are the unstable eigenvalues and maximum eigenvalue of matrix $A$, respectively.
\end{theorem}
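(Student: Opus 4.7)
The plan is to specialize the Riccati-like condition from Theorem \ref{theorem_support} to the LTI setting and then extract eigenvalue information by taking determinants (for the underactuated case) or by reducing to a discrete Lyapunov inequality (for the fully actuated case).

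First, since $f(x)=Ax$, the Jacobian $A(x_n)=A$ and the trajectory-dependent matrices in \eqref{riccati} are constant, so the Riccati-like equation reduces to the classical discrete algebraic Riccati equation with a constant positive definite solution $Q_0$ (existence and positive definiteness follow from Assumption \ref{assumption_2}, which for LTI systems is the usual complete controllability of the pair $(A,B)$). Substituting into \eqref{necc_condition11} gives the inequality
\begin{equation*}
A'\bigl[Q_0-\rho Q_0 B(B'Q_0B)^{-1}B'Q_0\bigr]A<Q_0.
\end{equation*}
Write the bracketed matrix as $M=Q_0^{1/2}(I-\rho\Pi)Q_0^{1/2}$, where
\begin{equation*}
\Pi:=Q_0^{1/2}B(B'Q_0B)^{-1}B'Q_0^{1/2}
\end{equation*}
is an orthogonal projection of rank $d$. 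Consequently the eigenvalues of $I-\rho\Pi$ are $\bar\rho=1-\rho$ (with multiplicity $d$) and $1$ (with multiplicity $N-d$), so $\det M=\bar\rho^{d}\det Q_0$.

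For the underactuated case $d<N$, I take determinants on both sides of $A'MA<Q_0$. Since both sides are positive definite, the strict Loewner inequality implies a strict determinant inequality, giving $|\det A|^2\bar\rho^{d}\det Q_0<\det Q_0$; cancelling $\det Q_0>0$ and recalling $\det A=\prod_{k=1}^{N}\lambda_k$ yields precisely \eqref{linear_cond}. For the fully actuated case $d=N$ with $B$ invertible, the matrix identity $B(B'Q_0B)^{-1}B'=Q_0^{-1}$ collapses $M$ to $\bar\rho Q_0$, so the necessary condition becomes the discrete Lyapunov inequality $\bar\rho A'Q_0A<Q_0$ with $Q_0>0$. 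Such a solution exists if and only if the spectral radius of $\sqrt{\bar\rho}\,A$ is strictly less than one, which is exactly \eqref{linear_cond_N}.

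The main technical point to verify carefully is the passage from the Loewner inequality $A'MA<Q_0$ to the determinant inequality, and the positivity / invertibility of $Q_0$ and of $B'Q_0B$ needed to define the projection $\Pi$; both follow from the bounds $\alpha_1I\le P(x)\le\alpha_2 I$ in Theorem \ref{Lya_theorem} (which carry over to $Q_0$ up to the scaling constant $\Delta$ introduced in the proof of Theorem \ref{theorem_support}) together with the controllability Assumption \ref{assumption_2}. Once these are in hand the argument is essentially algebraic, and no further appeal to the trajectory-dependent machinery of Section \ref{limitation_section} is required.
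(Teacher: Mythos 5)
Your proposal is correct and follows essentially the same route as the paper: specialize the Riccati-like necessary condition of Theorem \ref{theorem_support} to the LTI case, take determinants of the resulting Loewner inequality for $1\le d<N$ (the paper uses the Sylvester identity $\det(I_N-\rho B(B'Q_0B)^{-1}B'Q_0)=\det(I_d-\rho(B'Q_0B)^{-1}B'Q_0B)=\bar\rho^{\,d}$, which is the same computation as your rank-$d$ orthogonal projection argument), and reduce to the Lyapunov inequality $\bar\rho A'Q_0A<Q_0$ when $B$ is invertible. Your write-up is in fact slightly more careful than the paper's on the points of why $Q_0>0$ and why a strict Loewner inequality between positive definite matrices yields a strict determinant inequality.
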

\begin{proof}
{\it For $1\leq d<N$:}
A necessary condition (\ref{necc_condition11}) for mean-square exponential incremental stability  from Theorem \ref{theorem_support} for the special case of a linear time invariant system can be written as,
\begin{eqnarray}
A'Q_0A-\rho A'Q_0 B(B'Q_0 B)^{-1}B'Q_0A'<Q_0,\label{eqn_linear}
\end{eqnarray}
Taking determinants on both  sides of  the above equation and using the matrix determinant formula, i.e., $\det(I_N-\rho B(B'Q_0B)^{-1}B'Q_0)=\det (I_d-\rho (B'Q_0B)^{-1}B'Q_0B)$, we obtain
\[\det (A'Q_0)\det(\bar \rho I_d)\det(A)<\det (Q_0),\]
where $I_d$ in $d\times d$ identity matrix. Simplifying above inequality, we obtain,
\begin{eqnarray}
\bar \rho ^d\det(A)^2 =\bar \rho ^d\prod_{k=1}^N |\lambda_k|^2.\label{product_eig_lin}
\end{eqnarray}
{\it $N$ input case}: For the $N$ input case, matrix $B$ is invertible and a necessary condition for the mean-square exponential incremental stability (\ref{necc_condition11}) from Theorem \ref{theorem_support}, and therefore reduces to
\begin{eqnarray}\bar \rho A'Q_0A<Q_0.\label{cond_N}
\end{eqnarray}
A necessary condition for satisfying inequality (\ref{cond_N}) is given by
\[\bar \rho|\lambda_{\max}|^2<1.\].
\end{proof}
\begin{remark} Careful examination of the proofs for Theorems \ref{linearization} and  \ref{theorem_support}, for the special case of LTI systems, reveals  the conditions in Theorem \ref{linear_systems_cond}  are also sufficient for the mean-square exponential incremental stability of the LTI system for $d=1$ and $d=N$. Furthermore, it is not difficult to prove  the LTI system is mean-square exponential incremental stable, if and only if, the origin of the system is mean-square exponential stable. Hence, the results of Theorem \ref{linear_systems_cond} will also provide necessary and sufficient conditions for the mean-square exponential stability of LTI systems over erasure channels for $d=1$ and $d=N$. The results from  Theorem \ref{linear_systems_cond} are consistent and in agreement with the known results for the control of LTI systems over erasure channels for $d=1$ and $d=N$ cases \cite{scl04}.
\end{remark}

%
%
%
%

\subsection{Nonlinear system with ergodicity assumption}
The main theorem of this section provides a stability condition for nonlinear systems under some ergodicity assumption on system dynamics. In particular, we assume that the uncontrolled system has unique invariant measure  (Definition \ref{ergodic_measure}) and associated positive Lyapunov exponents (Definition \ref{Lyapunov_exponents}). Lyapunov function-based argument and theorems exists to ensure existence of invariant measure on unbounded state space \cite{Lasota} \footnote{For system with compact state space the existence of invariant measure is guaranteed under the continuity assumption on the system mapping. All the results from this section will apply to the compact state space case,  but we prefer to address the noncompact case as it more easily connected to the existing results. }.  Existence of invariant measure guarantee that the system has well defined steady state where the system dynamics eventually settle down. We start with the following definition of physical invariant measure.

%

\begin{definition}[Physical invariant measure]\label{ergodic_measure} A probability measure $\mu$ defined on $\mathbb{R}^N$, is said to be invariant for $x_{n+1}=f(x_n)$ if $\mu(B)=\mu(f^{-1}(B))$ for all sets $B\in {\cal B}(\mathbb{R}^N)$ (where $f^{-1}(B)$ is the inverse image of set $B$ and ${\cal B}(\mathbb{R}^N)$ is the Borel-$\sigma$ algebra on $\mathbb{R}^N$). An invariant probability measure is said to be ergodic, if any $f$-invariant set, $A$, i.e., $f^{-1}(A)=A$, has $\mu$ measure equal to one or zero. The ergodic invariant measure is said to be physical, if 
\begin{eqnarray}\lim_{n\to \infty}\frac{1}{n}\sum_{k=0}^n \varphi(x_k)=\int_{\mathbb{R}^N} \varphi(x)d\mu(x),\label{ec1}
\end{eqnarray} for positive Lebesgue measure set of initial condition $x_0\in \mathbb{R}^N$ and for all continuous function $\varphi:\mathbb{R}^N\to \mathbb{R}$. The physical measure, $\mu$, is said to be unique if (\ref{ec1}) holds true for all Lebesgue measure initial condition $x_0\in \mathbb{R}^N$ and all continuous function $\varphi$.
\end{definition}
Definition \ref{ergodic_measure} implies for Lebesgue, almost all initial conditions, $x\in \mathbb{R}^N$, will distribute themselves, according to the physical measure, $\mu$. A typical chaotic system will have infinitely many ergodic measures, but only one physical measure. Among all the invariant measures  the system has, one would expect to see physical measure in the simulation of the dynamical system. Set-oriented numerical methods are available for the computation of a physical measure in dynamical systems \cite{Dellnitz00}.
\begin{definition}[Lyapunov exponents] \label{Lyapunov_exponents}
For $x_{n+1}=f(x_n)$,  let
 \begin{equation}
L(x)=\lim_{n\rightarrow \infty}\left[\left(\prod_{k=0}^n\frac{\partial f}{\partial x}(x_k)\right)^{'}\left(\prod_{k=0}^n\frac{\partial f}{\partial x}(x_k)\right)\right]^{\frac{1}{2n}},\;\;x_0=x.\label{Lya_exp}
\end{equation}
If $\lambda^i_{exp}$ are the eigenvalues of $L(x_0)$, then the Lyapunov exponents, $\Lambda^i_{exp}$, are given by
$\Lambda^i_{exp}=\log \lambda^i_{exp}$ for $i=1,\ldots, N$. The maximum Lyapunov exponent can be obtained as the limit of the following quantity,
\begin{eqnarray}
\Lambda_{exp}(x_0)=\lim_{n\to \infty}\frac{1}{n}\log \parallel \prod_{k=1}^n \frac{\partial f}{\partial x}(x_k)\parallel, \label{max_Ly_exp}
\end{eqnarray}
where $\parallel \cdot \parallel$ is the induced $2-$ norm.
Furthermore, if $\det(L(x))\neq 0$ then
\begin{align}
\label{det_sum_exp}
\lim_{n\to \infty}\frac{1}{n}\log|\det\left(\prod_{k=0}^n \frac{\partial f}{\partial x}(x_k)\right)| =\log \prod_{k=1}^N \lambda^k_{exp}(x).
\end{align}
\end{definition}

\begin{remark} \label{remark_det}
The technical condition for the existence of Lyapunov exponents and the limit in (\ref{det_sum_exp}) is given by the Oseledec multiplicative ergodic theorem \cite{Ruelle85}. Lyapunov exponents for nonlinear systems are defined with respect to a particular invariant measure. Hence, in general, they will be a function of the initial condition, $x_0$. Under the assumption the  system has unique physical invariant measure, the Lyapunov exponents and limits in (\ref{max_Ly_exp}) and (\ref{det_sum_exp}) will be independent of the initial condition, $x_0$.
The technical conditions required by Oseledec multiplicative ergodic theorem are satisfied by the system map, $f$, in the form of Assumption \ref{assumption_1}. For a detailed statement of the multiplicative ergodic theorem, we refer  readers to \cite{ergodic_theory_walter} (Theorem 10.4) and discussion in section D of \cite{Ruelle85}.
 \end{remark}
\begin{assumption}\label{unique_physical} Assume  the system, $x_{n+1}=f(x_n)$ has a unique physical invariant measure with all its Lyapunov exponents positive.
\end{assumption}
\begin{remark} The  assumption on uniqueness of physical invariant measure is not necessary to prove the main results of this section. The main results can be proven under the existence of an ergodic invariant measure (or ergodic decomposition of invariant measure) guaranteed to exist following  Remark \ref{remark_det}. However, the assumption of a unique physical measure allows us to prove the main results that are independent of the initial conditions. Without the uniqueness assumption, the main result of this section will be a function of the initial condition, $x$. The assumption of all Lyapunov exponents positive is similar to the assumption made for the case of LTI systems  all eigenvalues are positive. The physical measure with a positive Lyapunov exponent implies the existence of an attractor set with chaotic dynamics. Furthermore, the assumption of all Lyapunov exponent positive is a technical assumption and is made for the simplicity of presentation. This assumption can be relaxed using the technique of tempered transformation \cite{Katok}, which allows one to decompose the state space into directions of positive and negative exponents.
%
\end{remark}
\begin{theorem}\label{theorem_compact_case} Consider  system (\ref{big_system}) with system mapping, $f$, satisfying Assumptions  \ref{assumption_1}, \ref{assumption_2}, and \ref{unique_physical}. Then, a necessary condition for the mean-square exponential incremental stability with controller mapping satisfying Assumption \ref{assumption_controller}
\begin{itemize}
\item for the number of control inputs $1\leq d<N$ is given by
\begin{equation}
\bar \rho ^d \max\left(\prod_{k=1}^N \left(\lambda^{k}_{\exp}\right)^2,\prod_{k=1}^N \left(|\lambda^{k}_{0}|\right)^2 \right) < 1,\label{necessary_condition}
\end{equation}
where $\bar \rho=1-\rho$, $\lambda^k_{exp}=\exp^{\Lambda^k_{exp}}$ and $\Lambda^k_{exp}>0$ is the $k^{th}$ positive  Lyapunov exponent
of the  system, $x_{n+1}=f(x_n)$, and $\lambda_0^k$ are the unstable eigenvalues of the Jacobian, $\frac{\partial f}{\partial x}(0)$, at the origin.
\item the $N$ input case is given by
\begin{equation}
\bar \rho\max \left((\lambda_{exp}^1)^2,|\lambda_{max}|^2\right)<1,
\label{necessary_condition_compact_Ninput}
\end{equation}
where $\lambda^1_{exp}=\exp^{\Lambda^1_{exp}}$ and $\Lambda^1_{exp}>0$ is the maximum  Lyapunov exponent
of the  system, $x_{n+1}=f(x_n)$, and $\lambda_{max}$ is the maximum eigenvalue of the Jacobian $\frac{\partial f}{\partial x}(0)$.

\end{itemize}
\end{theorem}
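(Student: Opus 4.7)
The plan is to reduce both parts of the theorem to the matrix Riccati-like inequality \eqref{necc_condition11} from Theorem \ref{theorem_support} and then extract asymptotic growth rates along a typical trajectory of $x_{n+1}=f(x_n)$.

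\textbf{Step 1 (underactuated case, $1\leq d<N$; trajectory bound).} I would take determinants on both sides of \eqref{necc_condition11}. Writing the left-hand side as $A'(x_n)\bigl[I_N-\rho B(B'Q_0(x_{n+1})B)^{-1}B'Q_0(x_{n+1})\bigr]Q_0(x_{n+1})A(x_n)$ and using the Sylvester identity $\det(I-XY)=\det(I-YX)$, the middle determinant collapses to $\det(I_d-\rho I_d)=\bar\rho^{\,d}$. This yields the scalar inequality
\[\bar\rho^{\,d}\,|\det A(x_n)|^{2}\,\det Q_0(x_{n+1}) \;<\; \det Q_0(x_n).\]
Taking logarithms, summing from $n=0$ to $T-1$, and dividing by $T$ gives
\[\tfrac{1}{T}\log\det Q_0(x_T)-\tfrac{1}{T}\log\det Q_0(x_0) \;<\; -d\log\bar\rho \;-\; \tfrac{2}{T}\log\Bigl|\det\prod_{k=0}^{T-1}A(x_k)\Bigr|.\]

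\textbf{Step 2 (passing to the limit).} Because $Q_0(x)$ is uniformly bounded above and below by $\alpha_1 I$ and $\alpha_2 I$ (as established in the proof of Theorem~\ref{theorem_support} via Assumption~\ref{assumption_2}), the left side vanishes as $T\to\infty$. By Assumption~\ref{unique_physical} and the Oseledec identity \eqref{det_sum_exp}, the average on the right converges to $\sum_{k=1}^{N}\Lambda^{k}_{exp}$ for Lebesgue almost every $x_0$. Passing to the limit, one obtains $\bar\rho^{\,d}\prod_{k=1}^{N}(\lambda_{exp}^{k})^{2}<1$. The strict inequality is preserved in the limit because the pointwise Riccati strict inequality is uniform: using the bounds on $Q_0$ and Assumption~\ref{assumption_controller}, one obtains a fixed $\varepsilon>0$ with $\bar\rho^{\,d}|\det A(x_n)|^2\det Q_0(x_{n+1})\leq (1-\varepsilon)\det Q_0(x_n)$, contributing an extra $\log(1-\varepsilon)$ on the right-hand side of Step~1.

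\textbf{Step 3 (equilibrium component, and $N$-input case).} To obtain the second term in the max, I run the same determinant argument with the special initial condition $x_0=0$, which is permitted by the statement of Theorem~\ref{theorem_support}. Then $x_n\equiv 0$, so $A(x_n)=\tfrac{\partial f}{\partial x}(0)$ is constant with eigenvalues $\lambda_0^k$, and the iteration produces $\bar\rho^{\,d}\prod_{k=1}^{N}|\lambda_0^k|^{2}<1$. Combining with Step~2 gives \eqref{necessary_condition}. For the $N$-input case ($d=N$, $B$ invertible), the identity $B(B'Q_0B)^{-1}B'=Q_0^{-1}$ reduces \eqref{necc_condition11} to $\bar\rho\,A'(x_n)Q_0(x_{n+1})A(x_n)<Q_0(x_n)$. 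Iterating along the trajectory, $\Phi_T'Q_0(x_T)\Phi_T<\bar\rho^{-T}Q_0(x_0)$ for $\Phi_T=\prod_{k=0}^{T-1}A(x_k)$. Taking the operator norm and using the bounds $\alpha_1 I\leq Q_0\leq \alpha_2 I$, I get $\|\Phi_T\|^2\leq(\alpha_2/\alpha_1)\,\bar\rho^{-T}$, and \eqref{max_Ly_exp} yields $\bar\rho(\lambda_{exp}^{1})^{2}<1$. Applying the same argument at $x_0=0$ gives $\bar\rho|\lambda_{max}|^{2}<1$, and the two combine into \eqref{necessary_condition_compact_Ninput}.

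\textbf{Principal obstacle.} The delicate point is preserving the strict inequality in the asymptotic step. The pointwise Riccati inequality is strict, but Cesaro-type limits typically only preserve weak inequality. Overcoming this requires extracting a uniform slack: I would argue that the uniform bounds $\alpha_1 I\leq Q_0\leq \alpha_2 I$ on the matrix Lyapunov function, together with the uniform controllability in Assumption~\ref{assumption_2} and the lower bound on the Hessian-like quantity in Assumption~\ref{assumption_controller}, force a uniform gap $\varepsilon>0$ in the Riccati inequality and hence in the determinant inequality; this gap survives the averaging and gives strict inequality in the limit. A secondary subtlety is that the Oseledec identity \eqref{det_sum_exp} needs to hold at Lebesgue-typical $x_0$ with a deterministic limit, which is precisely what the uniqueness of the physical invariant measure in Assumption~\ref{unique_physical} provides.
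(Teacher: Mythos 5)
Your proposal follows essentially the same route as the paper's proof: take determinants of the Riccati-like necessary condition from Theorem \ref{theorem_support} using the Sylvester/matrix-determinant identity, iterate along the trajectory, use the uniform two-sided bounds on $Q_0$ together with (\ref{det_sum_exp}) (resp. (\ref{max_Ly_exp}) after reducing to $\bar\rho A'Q_0A<Q_0$ in the $N$-input case) to obtain the Lyapunov-exponent condition, and evaluate separately at $x_0=0$ for the eigenvalue condition. The only difference is that you explicitly flag and patch the strictness-of-the-limit issue via a uniform slack, a subtlety the paper's proof passes over silently.
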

\begin{proof}
 For $1\leq d<N$: Using the results from Theorem \ref{theorem_support}, a necessary condition for mean-square exponential incremental stability is given by,
\begin{eqnarray}
A'(x_n)Q_0(x_{n+1})A(x_n)- \rho A'(x_n)Q_0(x_{n+1})B(B'Q_0(x_{n+1})B)^{-1}B'Q_0(x_{n+1})A(x_n)<Q_0(x_n).
\label{ll1}\end{eqnarray}
$Q_0(x)$ satisfies the following Riccati-like equation,
\begin{eqnarray}A'(x_n)Q_0(x_{n+1})A(x_n)-A'(x_n)Q_0(x_{n+1})B(I+B'Q_0(x_{n+1})B)^{-1}B'Q_0(x_{n+1})A(x_n)\nonumber\\
+R(x_n)=Q_0(x_n),
\end{eqnarray}
where $x_{n+1}=f(x_n)$ and $A(x_n):=\frac{\partial f}{\partial x}(x_n)$.
Taking determinants on both the sides of (\ref{ll1}) and using the matrix determinant formula, i.e., $\det (I_N- \rho B(B'Q_0(x_{n+1})B)^{-1}B'Q_0(x_{n+1}))=\det (I_d- \rho(B'Q_0(x_{n+1})B)^{-1}B'Q_0(x_{n+1})B)$, we obtain
\begin{eqnarray}
\bar \rho^d\det(A^2(x_n))\det(Q_0(x_{n+1}) Q^{-1}_0(x_n))<1.\label{cond}
\end{eqnarray}
The above necessary condition holds true for all initial conditions, $x_n$. Hence, we  can evaluate the above condition along the system trajectory, $x_{n+1}=f(x_n)$. We obtain
\begin{eqnarray}
\bar \rho^{dn} \det(Q_0(x_{n+1}))\prod_{k=0}^n\det(A^2(x_k)) \det(Q^{-1}_0(x_0))<1.
\end{eqnarray}
Taking the logarithm and average with respect to $n$ and in the limit as $n\to \infty$ of the above expression, we obtain
\[\log \bar \rho^d+\lim_{n\to \infty}\frac{1}{n}\log \left(\det(Q_0(x_{n+1}))\prod_{k=0}^n\det(A^2(x_k)) \det(Q^{-1}_0(x_0))\right)<0. \]
Now, using the fact that $Q_0(x)$ is bounded above and below, and using equality (\ref{det_sum_exp}) from Definition \ref{ergodic_measure}, we obtain the following necessary condition for stability,
\begin{eqnarray}
\bar \rho^d\prod_{k=1}^N (\lambda_{exp}^k)^2<1.\label{cond1}
\end{eqnarray}
Similarly, if we evaluate the necessary condition (\ref{cond}) at the equilibrium point, $x=0$, we obtain
\begin{eqnarray}
\bar \rho^d\prod_{k=1}^N (|\lambda_0^k|)^2<1.\label{cond2}\end{eqnarray}
Combining (\ref{cond1}) and (\ref{cond2}), we obtain the required necessary condition (\ref{necessary_condition}) for stability. \\
{\it $N$ input case}:  With the $N$ input case, the matrix, $B$, is invertible. The necessary condition (\ref{necc_condition11}) for mean-square exponential incremental stability reduces to
\begin{eqnarray}
\bar \rho A'(x_n)Q(x_{n+1})A(x_n)<Q(x_n).\label{ss2}
\end{eqnarray}
Since $Q(x_{n})$ is both bounded above and below, there exists an invertible matrix $T(x_n)$, such that  $Q(x_n)=T'(x_n)T(x_n)$. The necessary condition (\ref{ss2}) can be written as
\[\bar \rho \hat A'(x_n)\hat A(x_n)<I,\]
where $\hat A(x_n):=T(x_{n+1})A(x_n)T^{-1}(x_n)$. Since the above inequality holds true for almost all initial conditions, we can evaluate the inequality along the trajectory of the system $x_{n+1}=f(x_n)$. We obtain
\[\bar \rho^n \hat A'(x_0)\hat A'(x_1)\cdots \hat A'(x_{n-1})\hat A(x_{n-1})\cdots \hat A(x_1)\hat A(x_0)<I,\]
which is equivalent to
\[\bar \rho^n (T^{-1}(x_0))^{'}A'(x_0)A(x_1)\cdots A(x_{n-1})Q(x_n)A(x_{n-1})\cdots A(x_1)A(x_0)T^{-1}(x_0)<I.\]
Again, using the fact $Q(x)$ is bounded above and below, the above inequality implies the following necessary condition for stability,
\[\log \bar \rho+ \lim_{n\to \infty}\frac{1}{n}\log \parallel \prod_{k=0}^n A(x_k)\parallel^2<0. \]
Using equality (\ref{max_Ly_exp}) from Definition \ref{ergodic_measure}, we obtain the following necessary condition for stability
\begin{eqnarray}
\bar \rho(\lambda^1_{exp})^2<1, \label{cond_N1}
\end{eqnarray}
where $\lambda_{exp}^1$ is the maximum Lyapunov exponent of the system, $x_{n+1}=f(x_n)$. Similarly, evaluating the necessary conditions for stability at the equilibrium point, $x=0$, we obtain \begin{eqnarray}
\bar \rho|\lambda_{max}|^2<1.\label{cond_N2}
\end{eqnarray}
Combining (\ref{cond_N1}) and (\ref{cond_N2}), we obtain the desired necessary condition (\ref{necessary_condition_compact_Ninput}) for stability of the $N$ input case.

\end{proof}
The results of Theorem \ref{theorem_compact_case} can be viewed as the generalization of the results from Theorem \ref{linear_systems_cond} for LTI systems, where Lyapunov exponents emerge as the natural generalization of linear system eigenvalues to nonlinear systems.

\subsection{Sufficient condition for stability}
Theorem \ref{theorem_sufficient} provides sufficient condition for stability.
\begin{theorem}\label{theorem_sufficient} System (\ref{big_system}), satisfying Assumptions \ref{assumption_1} and \ref{assumption_2}, is mean-square exponential incrementally stable for all values of non-erasure probability, $p$, that satisfies
\begin{eqnarray}
\frac{\partial f(x)}{\partial x}^{'}\left(P-p PB(B'PB)^{-1}  B'P\right)\frac{\partial f(x)}{\partial x}<P,\label{sufficient}
\end{eqnarray}
for almost all with respect to the Lebesgue measure, $x\in X$ and for $x=0$, where matrix $P>0$ satisfies the following Riccati equation,
\[A'PA-A'PB(I+B'PB)B'PA=P,\]
with $A=\frac{\partial f}{\partial x}(0)$.
\end{theorem}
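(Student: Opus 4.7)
The plan is to exhibit an explicit stabilizing feedback built from the Riccati solution $P$ at the origin, then use the constant Lyapunov function $V(\eta) = \eta' P \eta$ on the incremental dynamics. Specifically, I would choose
\[
k(x) \;=\; -\frac{\mu}{\mu^2+\sigma^2}(B'PB)^{-1}B'P\,f(x),
\]
so that $k(0)=0$ (since $f(0)=0$) and
\[
\frac{\partial k}{\partial x}(x) \;=\; -\frac{\mu}{\mu^2+\sigma^2}(B'PB)^{-1}B'P\,\frac{\partial f}{\partial x}(x),
\]
which under Assumption \ref{assumption_1} has a uniformly bounded Jacobian and is consistent with Assumption \ref{assumption_controller}. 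Note that this choice is exactly the closed form (\ref{optimal_control_tangent}) used in the proof of Theorem \ref{theorem_support}, with the state-dependent $P(x_{n+1})$ replaced by the constant $P$ supplied by the theorem.

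Next I would attack the incremental dynamics directly. Let $\{x_n\}, \{y_n\}$ be two trajectories driven by identical $\xi_n, w_n, \gamma_n$; the noise $\gamma_n$ and input $w_n$ cancel in the difference, and writing $f$ and $k$ in integrated Jacobian form gives
\[
x_{n+1}-y_{n+1} \;=\; (M_n + \xi_n B N_n)(x_n-y_n),
\]
where $M_n = \int_0^1 \frac{\partial f}{\partial x}(y_n + s(x_n-y_n))\,ds$ and $N_n = \int_0^1 \frac{\partial k}{\partial x}(y_n + s(x_n-y_n))\,ds = -\tfrac{\mu}{\mu^2+\sigma^2}(B'PB)^{-1}B'P\,M_n$. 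Expanding the one-step Lyapunov increment with $V(\eta)=\eta'P\eta$ and taking expectation in $\xi_n$, the cross terms combine (using $E[\xi_n]=\mu$, $E[\xi_n^2]=\mu^2+\sigma^2$, $p=\mu^2/(\mu^2+\sigma^2)$) into
\[
E_{\xi_n}\!\left[V(x_{n+1}-y_{n+1})\right] \;=\; (x_n-y_n)'\,M_n'\,W\,M_n\,(x_n-y_n),\qquad W := P - p\,PB(B'PB)^{-1}B'P.
\]

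A key structural observation is that $W \ge 0$: writing $\Pi$ for the orthogonal projector onto the column space of $P^{1/2}B$, one has $W = P^{1/2}(I-p\Pi)P^{1/2}\ge 0$ for $p\in[0,1]$. Consequently $\eta \mapsto \eta' W \eta$ is convex and Jensen's inequality applied to the vector-valued integrand $J_n(s)(x_n-y_n)$ with $J_n(s) := \frac{\partial f}{\partial x}(y_n + s(x_n-y_n))$ gives
\[
M_n'\,W\,M_n \;\le\; \int_0^1 J_n(s)'\,W\,J_n(s)\,ds.
\]
Invoking the hypothesis (\ref{sufficient}) at each point on the segment, combined with the uniform boundedness of the Jacobian from Assumption \ref{assumption_1} (which upgrades the pointwise strict inequality to a uniform gap $J_n(s)'WJ_n(s) \le \beta P$ for some $\beta<1$), one obtains $E_{\xi_n}[V(x_{n+1}-y_{n+1})] \le \beta\, V(x_n-y_n)$. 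Iterating and using the tower property together with $\alpha_1 I \le P \le \alpha_2 I$ yields the required $E_{{\cal X}_0^n}[\|x_{n+1}-y_{n+1}\|^2] \le K\beta^n \|x_0-y_0\|^2$, covering both the noise-free and noisy cases of (\ref{big_system}) since $\gamma_n$ drops out of the difference.

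The main obstacle is the transition from the pointwise strict inequality in (\ref{sufficient}) to the uniform contraction rate $\beta<1$ needed for exponential decay; this is where one must carefully exploit the uniform bounds on $\partial f/\partial x$ supplied by Assumption \ref{assumption_1}, and verify that the resulting $\beta$ can be chosen independent of $(x_n, y_n)$. A secondary technical point is justifying the matrix-valued mean-value representation and the Jensen step rigorously; both are routine once $W\ge 0$ is established, which is the pivotal algebraic fact that makes the argument close.
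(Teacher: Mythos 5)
Your proposal is correct and reaches the same conclusion with the same essential ingredients --- the feedback $k(x)=-(B'PB)^{-1}B'Pf(x)$ (your gain $\mu/(\mu^2+\sigma^2)$ equals $1$ for the Bernoulli channel with non-erasure probability $p$, since $\mu^2+\sigma^2=p=\mu$) and the constant quadratic Lyapunov matrix $P$, with the one-step expectation collapsing to $W=P-pPB(B'PB)^{-1}B'P$ --- but the reduction of the nonlinear increment to the linearized analysis is genuinely different from the paper's. The paper parametrizes the straight segment $z_0(\alpha)=\alpha x_0+(1-\alpha)y_0$, pushes the whole curve forward under the closed-loop map, and works with the exact variational dynamics $\zeta_{n+1}=(I-\xi_nB(B'PB)^{-1}B'P)\frac{\partial f}{\partial z}(z_n(\alpha))\zeta_n$ of the tangent vector; condition (\ref{sufficient}) is then applied pointwise at each $z_n(\alpha)$, and only at the very end is an integral over $\alpha$ used to bound $\|x_{n+1}-y_{n+1}\|^2$ by the expected squared curve length. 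You instead re-apply the mean value theorem at every step to the current pair $(x_n,y_n)$, producing the averaged Jacobian $M_n$, which forces you to commute the quadratic form $W$ past the average; this is exactly where your observation $W=P^{1/2}(I-p\Pi)P^{1/2}\ge 0$ and the Jensen step earn their keep --- the paper never needs $W\ge 0$ because it never averages Jacobians. Your route is arguably more self-contained (no curve bookkeeping, no interchange of $\partial/\partial\alpha$ with the dynamics), at the price of the extra algebraic fact about $W$. Both arguments share the same two unaddressed technicalities: (\ref{sufficient}) holds only for Lebesgue-a.e.\ $x$ while both proofs evaluate it on one-dimensional segments or curves, and the pointwise strict inequality must be upgraded to a uniform contraction rate $\beta<1$; you flag the latter explicitly, whereas the paper passes over it silently, so on this score your write-up is the more honest of the two.
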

\begin{proof}
Let $k(x)=-(B'PB)^{-1}B'Pf(x)$ be the feedback control input. With this input, we obtain $x_{n+1}=(I-\xi_nB(B'PB)^{-1}B'P)f(x_n)$. Consider any two points, $x_0$ and $y_0$, in $X$ and the curve joining these two points given by $z_0(\alpha)=\alpha x_0+(1-\alpha)y_0$ with $\alpha\in[0,1]$. The dynamical evolution  of this curve,  $z_0(\alpha)$, is given by $z_{n+1}(\alpha)=(I-\xi_n B(B'PB)^{-1}B'P)f(z_n(\alpha))$. Define a new variable, $\zeta_n=\frac{\partial z_n}{\partial \alpha}$. Hence, we obtain \[\zeta_{n+1}=\frac{\partial z_{n+1}}{\partial \alpha}=(I-\xi_n B(B'PB)^{-1}B'P)\frac{\partial f}{\partial z}(z_n)\zeta_n.\]
The length of the curve, $z_0(\alpha)$, after $n$ iterations is given by $\int_0^1 \parallel \zeta_n \parallel d\alpha$. Defining new coordinates, $\rho_n=P^{\frac{1}{2}}\zeta_n=T\zeta_n$, we obtain
\[\rho_{n+1}=T(I-\xi_n B(B'PB)^{-1}B'P)\frac{\partial f}{\partial z}(z_n) T^{-1}\rho_n.\]
Using (\ref{sufficient}), we obtain
\[E_{\xi_0^n}\left[\parallel \rho_{n+1}\parallel^2 \right]\leq \beta^n \parallel \rho_0 \parallel^2\]
for some $\beta<1$. Using the fact there exist positive constants, $\tau_1$ and $\tau_2$, such that $\tau_1 I\leq P\leq \tau_2 I$, we obtain
\[E_{\xi_0^n}\left[\parallel \zeta_{n+1}\parallel^2 \right]\leq \beta^n \frac{\tau_2}{\tau_1} \parallel \zeta_0 \parallel^2=K\beta^n  \parallel \zeta_0 \parallel^2.\]
We have
\[E_{\xi_0^n}\left[\parallel x_{n+1}-y_{n+1} \parallel^2\right]\leq \int_0^1 E_{\xi_0}^n[ \parallel \zeta_{n+1}\parallel^2 ]d\alpha\leq K \beta^n \parallel x_0-y_0\parallel^2.\]

\end{proof}

\section{Example}\label{section_simulation}

In this section, we continue with the discussion on the synchronization in discrete Lorentz system started in Section \ref{section_motivation}.  As mentioned previously, there are two competing instabilities the controller needs to work against to stabilize the unstable equilibrium at the origin. The parameter values, $\alpha$ and $\beta$, can be chosen so  the instability at the equilibrium point $(0,0)$ is larger or smaller than the average instability on the attractor set captured by the Lyapunov exponents. In particular, for a parameter value of $\alpha=1.25$ and $\beta=0.75$, the eigenvalues for the linearization at the origin are $\lambda_1=1.28$ and $\lambda_2=1.28$ with the product equal to $1.65$. The positive Lyapunov exponent for these parameter values is equal to $\Lambda_{exp}=.34$. Hence, $\lambda_{exp}=\exp{0.34}=1.40<\lambda_1 \lambda_2=1.65$. So, the critical erasure probability is computed, based on the unstable eigenvalues, and  equals $p_{eig}^{*}=1-\frac{1}{\lambda_1^2\lambda_2^2}=0.63$.

For the parameter values, $\alpha=2.25$ and $\beta=0.29$, the product of the unstable eigenvalues at the origin equals $\lambda_1 \lambda_2=1.08$ and the exponential of the positive Lyapunov exponent is equal to $\lambda_{exp}=1.13$. This leads to the critical non-erasure probability, based on the positive Lyapunov exponent  equal to $p^{*}_{lya}=1-\frac{1}{\lambda_{exp}^2}=0.21$. The critical non-erasure probability, based on the unstable eigenvalues, is $p_{eig}=0.14$.
The controller $k(x)$ is designed to cancel the nonlinearity (i.e., $u_n=k(x_n)=-[(1+\alpha \beta)(x_n+\sqrt{\alpha})-\beta (x_n+\sqrt{\alpha}) (y_n+\alpha) -\sqrt{\alpha}]$. In Fig. \ref{lorentz1}a, we show the attractor set for the uncontrolled system for the parameter value of $\alpha=2.25$ and $\beta=0.29$. The attractor set for the parameter value of $\alpha=1.25$ and $\beta=0.75$ is already shown in Fig. \ref{sim_section2}a. The attractor set in Figs. \ref{sim_section2}a and \ref{lorentz1}a correspond to the physical measure of the Lorentz system for two different sets of parameter values.

\subsection{Simulations} In Fig. \ref{lorentz1}, we show the simulation results for the parameter value of $\alpha=2.25$ and $\beta=0.29$, corresponding to the case where the instability due to the Lyapunov exponent is dominant over the eigenvalues.
In Figs. \ref{lorentz1}b and \ref{lorentz1}c, we show the histogram for the state error plot between the two trajectories of the system (\ref{slave}) for non-erasure probability of $p=0.15(<0.21=p^*)$ and for $p=0.35$ respectively. This simulation results in Fig. \ref{lorentz1} are obtained for the case where $F_s\neq F_m$. Similarly, in Figs. \ref{lorentz2}a and \ref{lorentz2}b, we show the histogram of the error plots for the case of $F_s=F_m$.

Comparing Fig.\ref{lorentz1}a with Fig. \ref{lorentz1}b and Fig. \ref{lorentz2}a with \ref{lorentz2}b, we notice the error dynamics show larger fluctuation around  zero for non-erasure probability below the critical value of $p^*$. It is important to emphasize that the non-erasure probability, $p=0.15$, satisfies $p_{eig}=0.14<0.15<0.21=p^*_{lya}$. Hence, although the system has overcome local instability due to eigenvalues, the global instability of the attractor set is still a limiting factor for stabilization. This is further evident from comparing the two attractor sets obtained for the $p=0.15$ and $p=0.28$ in Fig. \ref{lorentz2}c. These two attractor sets are obtained by simulating the system Eq. (\ref{slave}) without the coupling from  system (\ref{master}).
From this plot, we see that the attractor set for $p=0.15$ (in red) is chaotic, but for $p=0.28$, the attractor set (in blue) is concentrated around the origin. Finally, in Figs. \ref{lorentz3}a and \ref{lorentz3}b, we show the plot for the linearized error covariance for the parameter values of $(\alpha=1.25,\beta=0.75)$, and $(\alpha=2.25,\beta=0.29)$ respectively. From these plots, we clearly see that the covariance for parameter values of $(\alpha=1.25,\beta=0.75)$ and $(\alpha=2.25,\beta=0.29)$ become unbounded for non-erasure probability of $p^{*}=0.21$ and $p^{*}=0.63$, respectively, as predicted by the main result of this paper.
The predicted value of critical non-erasure probability of $p^*=0.63$ is in agreement with the simulation results presented in section \ref{section_motivation}.

\begin{figure}[h] \centering \subfigure[]
{\includegraphics[width=2.1in]{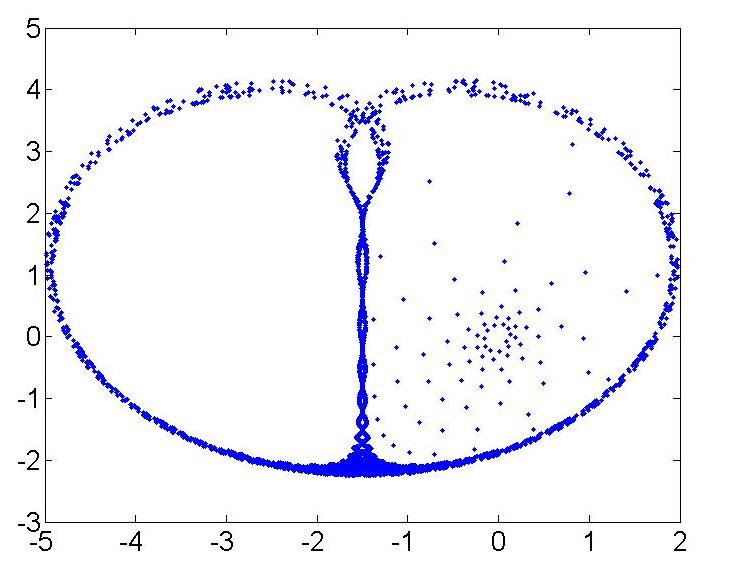}}\subfigure[]
{\includegraphics[width=2.1in]{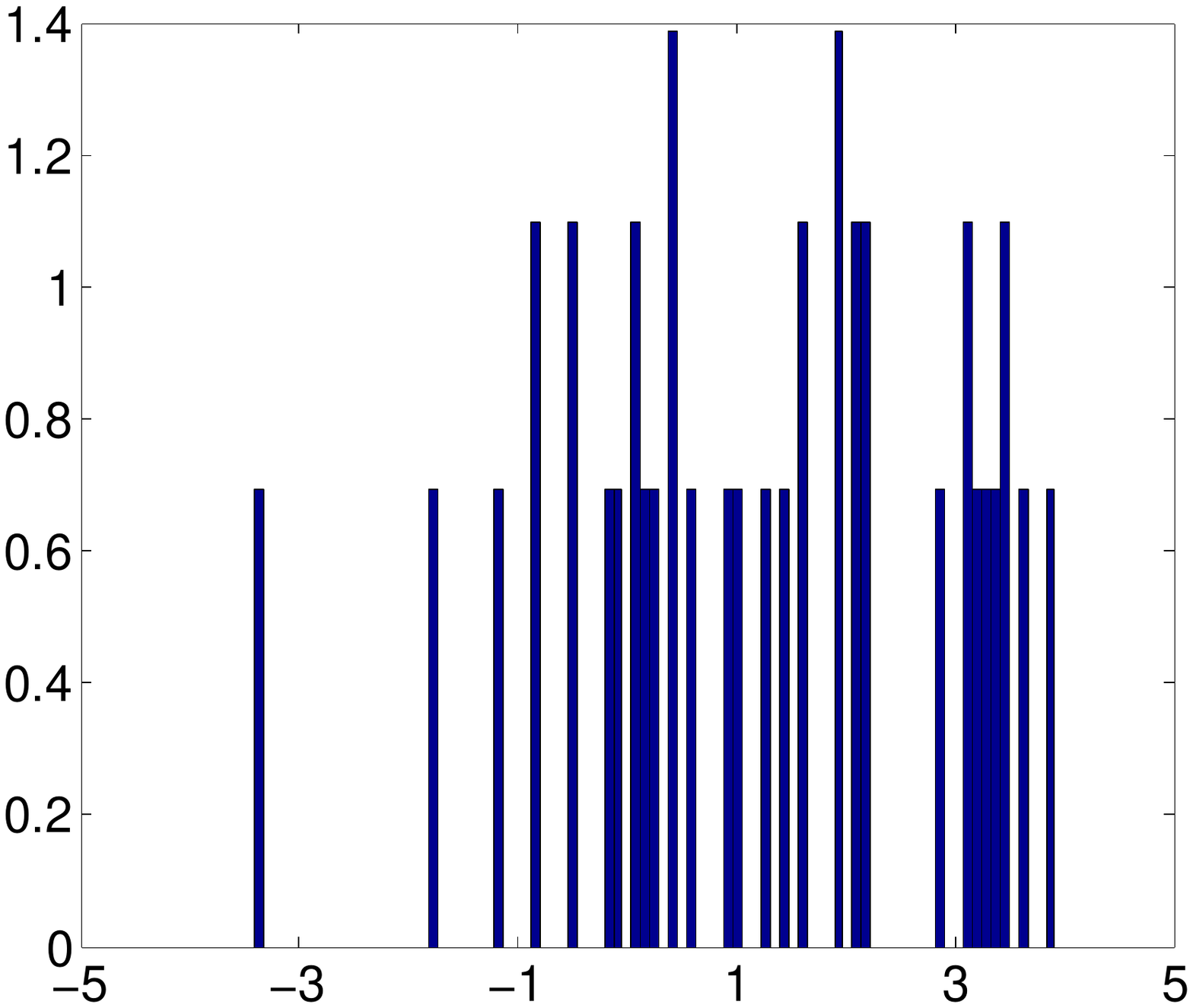}}
\subfigure[]
{\includegraphics[width=2.1in]{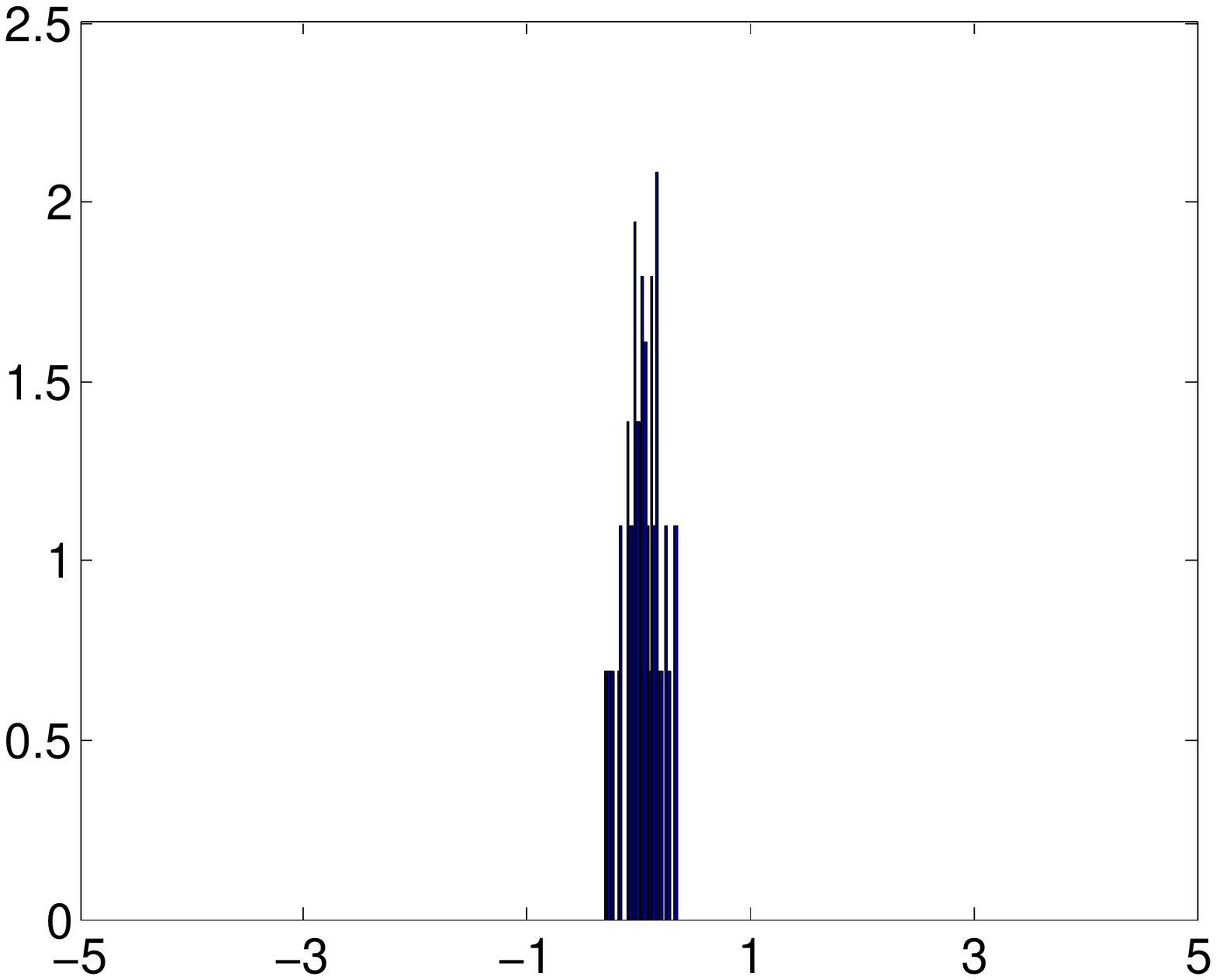}}
\caption{a) Chaotic attractor for parameters $\alpha=2.25, \beta=0.29$); b) Histogram for the error dynamics between two trajectories for non-erasure probability of $p=0.15<p^{*}=0.21$; c)
Histogram for the error dynamics between two trajectories for non-erasure probability of $p=0.3$.}
\label{lorentz1}
\end{figure}
\begin{figure}[h] \centering \subfigure[]
{\includegraphics[width=2.2in]{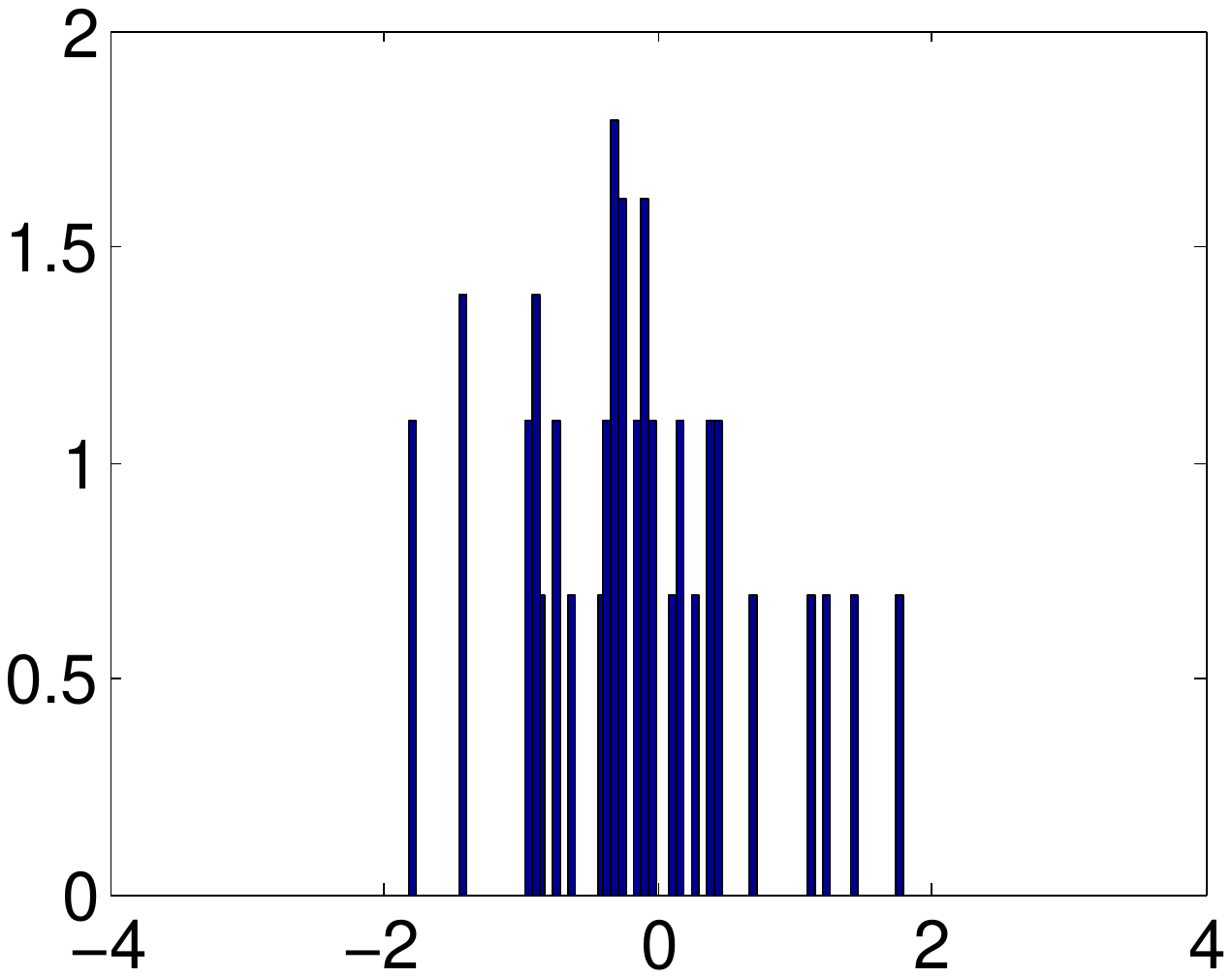}}\subfigure[]
{\includegraphics[width=2.1in]{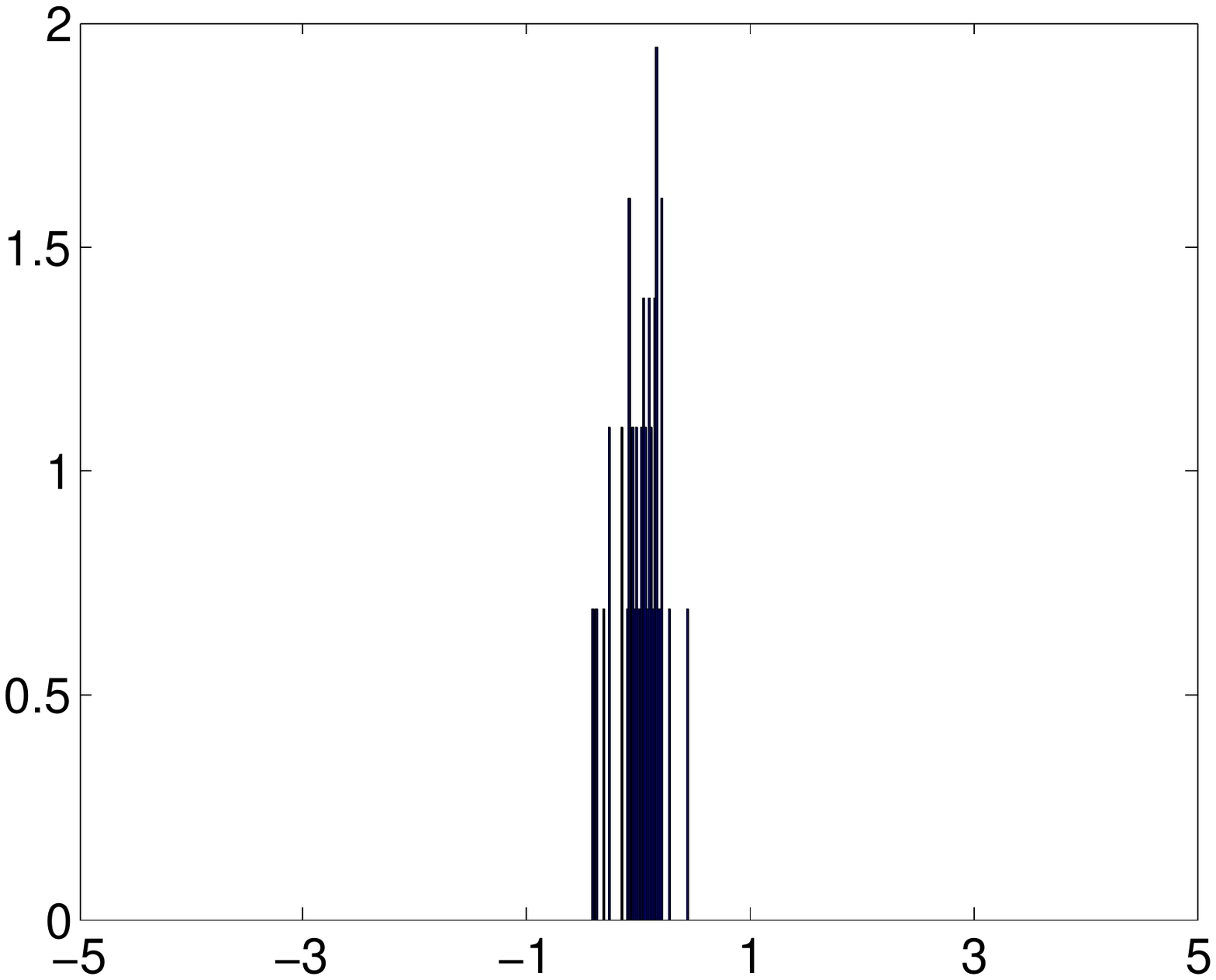}}
\subfigure[]
{\includegraphics[width=2.1in]{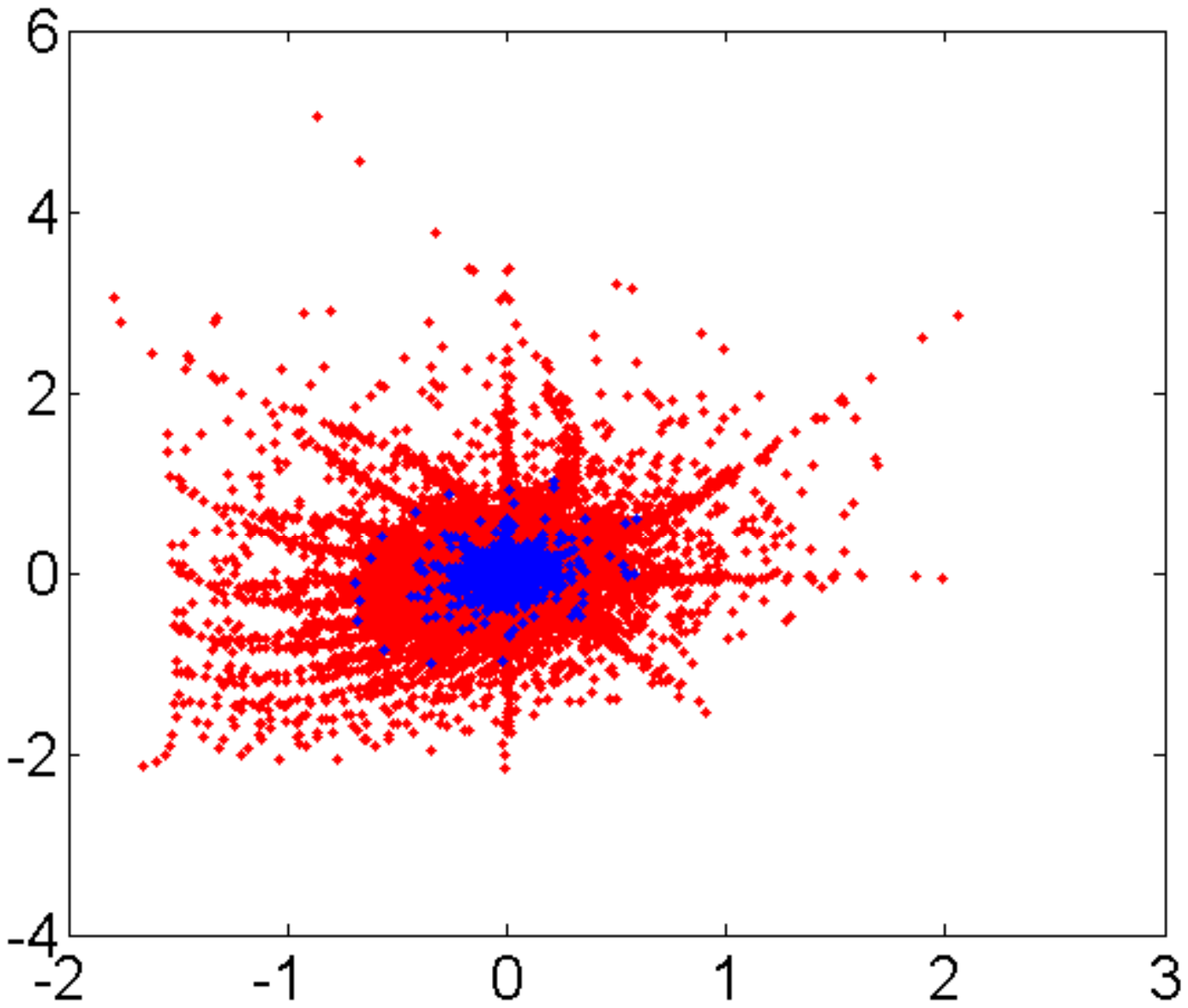}}
\caption{a)  Histogram for the error dynamics between two trajectories for non-erasure probability of $p=0.15$; b) Histogram for the error dynamics between two trajectories for non-erasure probability of $p=0.3$; c)  Comparison of two attractor sets for $p=0.15$ (red) and $p=0.28$ (blue); }
\label{lorentz2}
\end{figure}
\begin{figure}[h] \centering
\subfigure[]
{\includegraphics[width=2.1in]{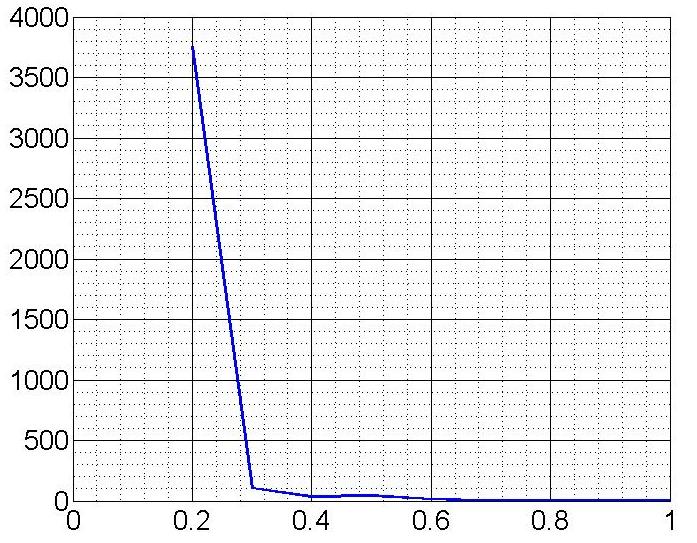}}
\subfigure[]
{\includegraphics[width=2.1in]{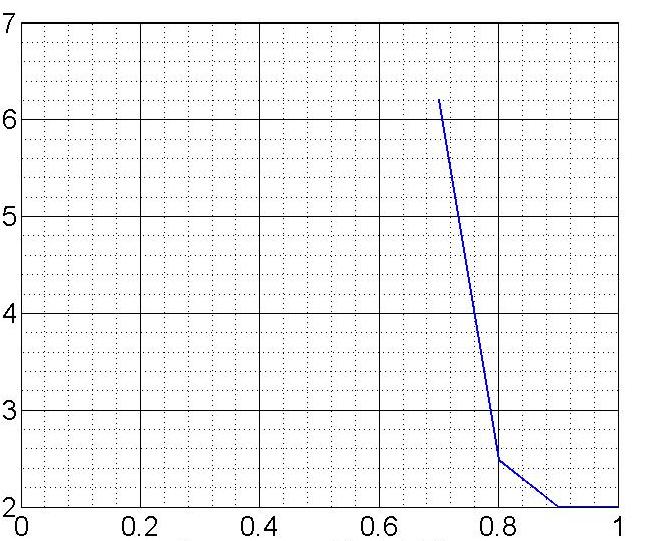}}
\caption{a) Trace of the linearized covariance vs non-erasure probability $p$ for $(\alpha,\beta)=(2.25,0.29)$;  b) Trace of the linearized covariance vs non-erasure probability $p$ for $(\alpha,\beta)=(1.25,0.75)$. }
\label{lorentz3}
\end{figure}

\section{Conclusions}\label{section_conclusion}
In this paper, we  considered the problem of incremental stabilization of a class of nonlinear systems by state feedback controller,  when the actuation command may be lost on a communication link with certain probability. The stochastic notion of mean-square exponential stability is adopted to study the incremental stabilization problem over an uncertain communication link. One of the important features of our main results is its global nature away from equilibrium and the emergence of the open-loop Lyapunov exponents as the natural generalization of the  linear system eigenvalues in capturing the system limitations. Our results are quite encouraging and our novel approach, based on ergodic theory of dynamical system, is amenable to various extensions.

\bibliographystyle{IEEEtran}
\bibliography{ref,ref1}

\begin{thebibliography}{10}
\providecommand{\url}[1]{#1}
\csname url@samestyle\endcsname
\providecommand{\newblock}{\relax}
\providecommand{\bibinfo}[2]{#2}
\providecommand{\BIBentrySTDinterwordspacing}{\spaceskip=0pt\relax}
\providecommand{\BIBentryALTinterwordstretchfactor}{4}
\providecommand{\BIBentryALTinterwordspacing}{\spaceskip=\fontdimen2\font plus
\BIBentryALTinterwordstretchfactor\fontdimen3\font minus
  \fontdimen4\font\relax}
\providecommand{\BIBforeignlanguage}[2]{{%
\expandafter\ifx\csname l@#1\endcsname\relax
\typeout{** WARNING: IEEEtran.bst: No hyphenation pattern has been}%
\typeout{** loaded for the language `#1'. Using the pattern for}%
\typeout{** the default language instead.}%
\else
\language=\csname l@#1\endcsname
\fi
#2}}
\providecommand{\BIBdecl}{\relax}
\BIBdecl

\bibitem{networksystems_specialissue}
P.~Antsaklis and J.~Baillieul, ``{Special issue on technology of newtorked
  control systems},'' \emph{{Proceedings of IEEE}}, vol.~95, no.~1, 2007.

\bibitem{communication-channel}
W.~S. Wong and R.~W. Brockett, ``Systems with finite communication bandwidth
  constraints ii: stabilization with limited information feedback,'' \emph{IEEE
  Transaction on Automatic Control}, vol.~44, no.~5, 1999.

\bibitem{sekhar.thesis}
S.~Tatikonda, ``Control under communication constraints,'' in \emph{Ph. D
  Thesis}, MIT, 2000.

\bibitem{anant}
A.~Sahai, ``Anytime information theory,'' in \emph{Ph. D Thesis}, Feb. 2001,
  2001.

\bibitem{hadjicostis}
R.~Touri and C.~N. Hadjicostis, ``Stabilization with feedback control utilizing
  packet dropping network links,'' \emph{{IET (IEE) Proceedings on Control
  Theory and Applications}}, vol.~1, no.~1, pp. 334--342, 2007.

\bibitem{jump_estimator}
S.~Smith and P.~Seiler, ``Estimation with lossy measurements: jump estimators
  for jump systems,'' \emph{IEEE Transactions on Automatic Control}, vol.~48,
  no.~12, pp. 2163--2171, 2003.

\bibitem{Luca03kalmanfiltering}
B.~Sinopoli, L.~Schenato, M.~Franceschetti, K.~Poolla, M.~I. Jordan, and S.~S.
  Sastry, ``Kalman filtering with intermittent observations,'' \emph{IEEE
  Transactions on Automatic Control}, vol.~49, pp. 1453--1464, 2003.

\bibitem{michales}
Q.~Ling and M.~Lemmon, ``A necessary and sufficient feedback dropout condition
  to stabilize quantized linear control systems with bounded noise,''
  \emph{{IEEE Transactions on Automatic Control}}, vol.~55, no.~6, pp.
  1494--1500, 2010.

\bibitem{martins_elia}
{N.C. Martins, M.A. Dahleh, and N.Elia}, ``{Feedback stabilization of uncertain
  systems in the presence of a direct link},'' \emph{{ IEEE Transactions on
  Automatic Control}}, vol.~51, no.~3, pp. 438--447, 2006.

\bibitem{nair-ms}
G.~Nair and R.~Evans, ``Stabilizability of stochastic linear systems with
  finite feedback data rates,'' \emph{{SIAM Journal on Control and
  Optimization}}, vol.~43, no.~2, pp. 413--436, 2004.

\bibitem{savkin1}
A.~Matveev and A.~Savkin, ``Optimal control of networked systems via
  asynchronous communication channels with irregular delays,'' in \emph{{
  Proceedings of IEEE Conference on Decision and Control}}, 2001, pp.
  2327--2332.

\bibitem{sekar_TAC}
{S. Tatikonda and S. Mitter}, ``{Control under communication constraints},''
  \emph{{IEEE Transactions on Automatic Control}}, vol.~49, no.~7, pp.
  1056--1068, 2004.

\bibitem{scl04}
N.Elia, ``Remote stabilization over fading channels,'' \emph{Systems and
  Control Letters}, vol.~54, pp. 237--249, 2005.

\bibitem{networksystems_foundation_sastry}
{L. Schenato and B. Sinopoli and M. Franceschitti and K. Poolla and S. Sastry},
  ``{Foundations of control and estimation over Lossy networks},''
  \emph{{Proceedings of IEEE}}, vol.~95, no.~1, pp. 163--187, 2007.

\bibitem{network_basar_tamar}
O.~Imer, S.~Yuksel, and T.~Basar, ``{Optimal control of LTI systems over
  communication networks},'' \emph{Automatica}, vol.~42, no.~9, pp. 1429--1440,
  2006.

\bibitem{gupta}
{V. Gupta and B. Hassibi and R. M. Murray}, ``{Optimal LQG Control Across
  Packet-Dropping Links},'' \emph{{System and Control Letters}}, vol.~56,
  no.~6, pp. 439--446, 2007.

\bibitem{sinopoli.acc05}
B.~Sinopoli, L.~Schenato, M.~Franceschetti, K.~Poolla, and S.~Sastry,
  ``{Optimal control with unreliable communication: the TCP case},'' in
  \emph{{Proceedings of American Control Conference}}, 2005, pp. 3354--3359.

\bibitem{prashanthacc}
P.~Padmasola and N.~Elia, ``Mean square stabilization of {LTI} systems over
  packet-drop networks with imperfect side information,'' in \emph{{Proc.
  American Control Conference}}, 2006, pp. 5550--5555.

\bibitem{tac11}
{N. Elia and J. N. Eisenbeis}, ``{Limitations of linear control over packet
  drop networks},'' \emph{{IEEE Transactions on Automatic Control}}, vol.~56,
  no.~4, pp. 826--841, 2011.

\bibitem{schwartz_opt_rec}
M.~Hadidi and S.~Schwartz, ``Linear recursive state estimators under uncertain
  observations,'' \emph{IEEE Transactions on Automatic Control}, vol.~24,
  no.~6, pp. 944--948, 1979.

\bibitem{murray_epstein}
M.~Epstein, L.~Shi, A.~Tiwari, and R.~M. Murray, ``Probabilistic performance of
  state estimation across a lossy network,'' \emph{Automatica}, vol.~44,
  no.~12, pp. 3046--3053, 2008.

\bibitem{control_MJLS_costa}
J.~D. Val, J.~Geromel, and O.~Costa, ``{Solutions for the linear quadratic
  control problem of markov jump linear systems},'' \emph{{J. Optimization
  Theory Appl.}}, vol. 103, no.~2, pp. 283--311, 1999.

\bibitem{estimation_MJLS}
O.~Costa, ``{Stationary filter for linear minimum least square error estimatior
  of discrete-time Markovian jump systems},'' \emph{{IEEE Transactions on
  Automatic Control}}, vol.~47, no.~8, pp. 1351--1356, 2002.

\bibitem{Nair04}
G.~N. Nair, R.~J. Evans, and I.~M.~Y. Mareels, ``Topological feedback entropy
  and nonlinear stabilization,'' \emph{IEEE Transaction on Automatic Control},
  vol.~49, no.~9, pp. 1585--1597, 2004.

\bibitem{mehta_fund}
P.~Mehta, U.~Vaidya, and A.~Banaszuk, ``Markov chains, entropy, and fundamental
  limitations in nonlinear stabilization,'' \emph{IEEE Transactions on
  Automatic Control}, vol.~53, pp. 784--791, 2008.

\bibitem{amit_observation}
A.~Diwadkar and U.~Vaidya, ``Limitations for nonlinear observation over erasure
  channel,'' \emph{IEEE Transactions on Automatic Control}, vol.~58, no.~2, pp.
  454--459, 2013.

\bibitem{scl_erasure}
U.~Vaidya and N.~Elia, ``Stabilization of nonlinear systems over packet-drop
  links: Scalar case,'' \emph{{Systems and Control Letters}}, vol.~61, pp.
  959--966, 2012.

\bibitem{cdc_erasure}
------, ``Limitation for nonlinear stabilization over erasure channels,'' in
  \emph{{ Proceedings of IEEE Conference on Decision and Control}}, Atlanta,
  GA, 2010, pp. 7551--7556.

\bibitem{amit_observation_conference}
A.~Diwadkar and U.~Vaidya, ``Nonlinear observation over erasure channels,'' in
  \emph{{ Proceedings of IEEE Conference on Decision and Control}}, Atlanta,
  GA, 2010, pp. 5309--5314.

\bibitem{Incremental_david}
{D. Angeli}, ``{A Lyapunov approach to incremental stability properties},''
  \emph{{IEEE Transactions on Automatic Control}}, vol.~47, no.~3, pp.
  410--421, 2002.

\bibitem{slotine_contraction}
W.~Lohmiller and J.~E. Slotine, ``On contraction analysis for non-linear
  systems,'' \emph{Automatica}, vol.~34, pp. 683--696, 1998.

\bibitem{Sepulchre_inter}
G.-B. Stan and R.~Sepulchre, ``Analysis of interconnected oscillators by
  dissipativity theory,'' \emph{IEEE Transaction on Automatic control},
  vol.~52, no.~2, pp. 256--270, 2007.

\bibitem{Ruelle85}
J.~P. Eckman and D.~Ruelle, ``Ergodic theory of chaos and strange attractors,''
  \emph{Rev. Modern Phys.}, vol.~57, pp. 617--656, 1985.

\bibitem{Kifer}
Y.~Kifer, \emph{Ergodic Theory of Random Transformations}, ser. Progress of
  Probability and Statistics.\hskip 1em plus 0.5em minus 0.4em\relax Boston:
  Birkhauser, 1986, vol.~10.

\bibitem{arnold_book_rds}
L.~Arnold, \emph{Random Dynamical Systems}.\hskip 1em plus 0.5em minus
  0.4em\relax {Berlin, Heidenberg}: {Springer Verlag}, 1998.

\bibitem{Katok}
A.~Katok and B.~Hasselblatt, \emph{Introduction to the modern theory of
  dynamical systems}.\hskip 1em plus 0.5em minus 0.4em\relax Cambridge, UK:
  Cambridge University Press, 1995.

\bibitem{kwakernaak}
H.~Kwakernaak and R.~Sivan, \emph{Linear Optimal Control Systems}.\hskip 1em
  plus 0.5em minus 0.4em\relax New York: Wiley Interscience, 1972.

\bibitem{controllability_discrete-time_sontag}
{B. Jakubczyk and E.D. Sontag}, ``{Controllability of nonlinear discrete-time
  systems: a Lie-algebraic approach},'' \emph{{SIAM J. Control Optim.}},
  vol.~28, no.~1, pp. 1--33, 1990.

\bibitem{Lasota}
A.~Lasota and M.~C. Mackey, \emph{Chaos, Fractals, and Noise: Stochastic
  Aspects of Dynamics}.\hskip 1em plus 0.5em minus 0.4em\relax New York:
  Springer-Verlag, 1994.

\bibitem{Dellnitz00}
M.~Dellnitz and O.~Junge, \emph{Set oriented numerical methods for dynamical
  systems}.\hskip 1em plus 0.5em minus 0.4em\relax World Scientific, 2000, pp.
  221--264.

\bibitem{ergodic_theory_walter}
P.~Walters, \emph{Introduction to Ergodic Theory}.\hskip 1em plus 0.5em minus
  0.4em\relax New York: Springer-Verlag, 1982.

\end{thebibliography}

\section{Appendix}\label{appendix}

Proof of Theorem \ref{theorem_secondmoment}
\begin{proof}

From the definition of mean square exponential incremental stability of system (\ref{big_system}) and using the fact that $f(0)=k(0)=0$ (Assumption \ref{assumption_1} and \ref{assumption_controller}) it follows that  system 
\begin{eqnarray}
x_{n+1}=f(x_n)+\xi_n Bk(x_n)=:T(x_n,\xi_n)\label{system_feedback_nonoise}.
\end{eqnarray}
%
is mean square exponentially stable i.e., there exist positive constants $\bar K_1<\infty$ and $\bar \beta_1<1$ such that 
\begin{eqnarray}E_{\xi_0^n}[\parallel x_{n+1}\parallel^2]\leq \bar K_1 \bar \beta_1^n \parallel
x_0\parallel^2.\label{2moment}
\end{eqnarray}
Furthermore, following Holder's Inequality we also obtain
\begin{eqnarray}E_{\xi_0^n}[\parallel x_{n+1}\parallel] \leq \left (E_{\xi_0^n}[\parallel x_{n+1}\parallel^2]\right)^{\frac{1}{2}} \leq \left(\bar K_1 \bar \beta_1^n \parallel
x_0\parallel^2 \right)^{\frac{1}{2}}= \bar K_1^{\frac{1}{2}} \bar \beta_1^{\frac{n}{2}} \parallel
x_0\parallel.\label{1moment}\end{eqnarray}
The mean square exponential stability of (\ref{system_feedback_nonoise}) implies existence of Lyapunov function,
$V(x)$, satisfying following conditions
\[c_1\parallel x_n\parallel^2 \leq V(x_n)\leq c_2 \parallel x_n\parallel^2,\;\;
E_{\xi_n}[V(T(x_n,\xi_n))]\leq  \alpha V(x_n)\]

\[ \parallel\frac{\partial V}{\partial x}(x_n)\parallel \leq c_3 \parallel x_n\parallel, \]

where, $c_1=1, c_2,c_3$ are positive constants and
$\alpha<1$. This can be proved as follows. Consider the following construction of Lyapunov function.
\[V(x_n)= \parallel x_n\parallel^2+\sum_{k=n+1}^\infty E_{\xi_n^{k-1}}[\parallel x_k\parallel^2]\implies
V(x_{n+1})= \parallel x_{n+1}\parallel^2+\sum_{k=n+2}^\infty E_{\xi_{n+1}^{k-1}}[\parallel x_k\parallel^2]\]
where $x_{n+1}=T(x_n,\xi_n)$. From the construction of $V$, it is clear that $V(x_n)\geq  \parallel x_n
\parallel^2$, hence $c_1=1$.
The upper bound follows from the mean square exponentially stability of (\ref{system_feedback_nonoise}) as
follows
\[V(x_n)= \parallel x_n\parallel^2+\sum_{k=n+1}^\infty E_{\xi_n^{k-1}}[\parallel x_k\parallel^2]\leq \parallel
x_n\parallel^2+\sum_{k=n+1}^\infty \bar K_1 \bar \beta_1^{k-1-n} \parallel x_n\parallel^2\leq \parallel x_n\parallel^2 c_2
, \]
where $c_2:=\bar K_1(1+\frac{1}{1-\bar \beta_1})>1$. From the construction of $V$, it follows that
\[E_{\xi_n}[V(x_{n+1})]=V(x_n)-\parallel x_n\parallel^2\leq (1-\frac{1}{c_2})V(x_n)=\alpha V(x_n)\]
since $c_2>1$, $(1-\frac{1}{c_2})=:\alpha<1$. We next prove the bound on the derivative of
Lyapunov function $V$. Towards this goal we make use of the Assumptions \ref{assumption_1} and \ref{assumption_controller} providing uniform bounds for
the Jacobian of system mapping, $f$, and feedback controller, $k$. We have

\[\left |\frac{\partial T}{\partial x}(x,\xi)\right|=\left|\frac{\partial f}{\partial x}(x)+\xi B \frac{\partial
k}{\partial x}(x)\right|\leq L(1+|\xi|)\]
where $L$ is the assumed uniform bound on the Jacobian of $f$ and $Bk$ i.e., $\max\{\parallel \frac{\partial f}{\partial x}\parallel, \parallel B\frac{\partial k}{\partial x}\}\leq L$. We introduce following notation
\[x_2=T(T(x_0,\xi_0),\xi_1)=:T^2(x_0,\xi_0^1), \;\;{\rm hence},\;\; x_k=T^k (x_0,\xi_0^{k-1})\] We now have
\[\frac{\partial V}{\partial x}(x)=2 x +2\sum_{k=1}^\infty  E_{\xi_0^{k-1}}\left[\left[\frac{\partial T}{\partial
x}\left(T^{k-1}(x,\xi_0^{k-2}),\xi_{k-1}\right)\right]^\top T^k(x,\xi_0^{k-1})\right].\]
Hence,
\begin{eqnarray}
\parallel\frac{\partial V}{\partial x}(x)\parallel \leq 2\parallel x\parallel +2 L\sum_{k=1}^\infty
E_{\xi_0^{k-1}}\left[(1+|\xi_{k-1}|)\parallel x_k\parallel\right].\label{ee1}
\end{eqnarray}
Now we use  following using Holder's inequality
\begin{eqnarray}
E_{\xi_0^{k-1}}\left[|\xi_{k-1}|\parallel x_{k}\parallel\right]\leq \left(E_{\xi_0^{k-1}}\left[| \xi_{k-1}|
^2\right]\right)^{\frac{1}{2}} \left(E_{\xi_0^{k-1}}\left[\parallel x_{k} \parallel
^2\right]\right)^{\frac{1}{2}}=(\sigma^2+\mu^2)^{\frac{1}{2}}\left(E_{\xi_0^{k-1}}\left[\parallel x_{k} \parallel
^2\right]\right)^{\frac{1}{2}}\label{ee2}
\end{eqnarray}
Combining inequalities (\ref{2moment}), (\ref{1moment}) and (\ref{ee2}), we obtain
\[
\parallel\frac{\partial V}{\partial x}(x)\parallel \leq 2\parallel x\parallel +2 L\sum_{k=1}^\infty
E_{\xi_0^{k-1}}\left[\parallel x_k\parallel\right]+2L(\sigma^2+\mu^2)^{\frac{1}{2}}\sum_{k=1}^\infty
\left(E_{\xi_0^{k-1}}\left[\parallel x_k\parallel^2\right]\right)^{\frac{1}{2}}\]
\[\parallel\frac{\partial V}{\partial x}(x)\parallel \leq  \left(2 +2 L\sum_{k=1}^\infty
\bar K_1^{\frac{1}{2}}\bar \beta_1^{\frac{k-1}{2}} +2L(\sigma^2+\mu^2)^{\frac{1}{2}}\sum_{k=1}^\infty
\bar K_1^{\frac{1}{2}}\bar \beta_1^{\frac{k-1}{2}}\right)\parallel x\parallel =: c_3
\parallel x\parallel\]

 We now consider system
 \[x_{n+1}=T(x_n,\xi_n)+ \gamma_n\] in the rest of the proof. Using the Lyapunov function, following inequality
 holds true for the above system

 \[E_{\xi_0^n\gamma_0^n}[\parallel x_{n+1}\parallel^2]\leq \frac{1}{c_1}
 E_{\xi_0^n\gamma_0^n}[V(x_{n+1})]=\frac{1}{c_1}E_{\xi_0^n\gamma_0^n}[V(T(x_n,\xi_n)+ \gamma_n)].\]
We now apply Mean Value Theorem to expand, $V$, as follows
\[V(T(x_n,\xi_n)+\gamma_n)=V(T(x_n,\xi_n))+\frac{\partial V}{\partial x}(T(x_n,\xi_n)+c_n \gamma_n)\gamma_n\]
 for some $c_n\in [0,1]$, which in general function of $\gamma_n$.
Using the fact that $c_n$ is bounded by one and using Holder's inequality we obtain
\begin{eqnarray}E_{\xi_n\gamma_n}[V(T(x_n,\xi_n)+\gamma_n)]&\leq
E_{\xi_n}[V(T(x_n,\xi_n))]+c_3E_{\xi_n\gamma_n}[\parallel T(x_n,\xi_n)\parallel \parallel
\gamma_n\parallel]+c_3 E_{\gamma_n}[\parallel \gamma_n\parallel^2]\nonumber\\&\leq
E_{\xi_n}[V(T(x_n,\xi_n))]+c_3C^{\frac{1}{2}}\left(E_{\xi_n}[\parallel
T(x_n,\xi_n)\parallel^2]\right)^{\frac{1}{2}}+c_3C
\end{eqnarray}
Now using the  fact that $\parallel x\parallel^2\leq V(x_n), E_{\xi_n}[V(T(x_n,\xi_n))]\leq \alpha V(x_n)$, we obtain
\[E_{\xi_n\gamma_n}[V(T(x_n,\xi_n)+\gamma_n)]\leq \alpha V(x_n)+b_1\left(
V(x_n)\right)^{\frac{1}{2}}+\bar Q\]
where $b_1:=c_3C^{\frac{1}{2}}\alpha^{\frac{1}{2}}$ and $\bar Q:=c_3 C$. Now choose $\alpha_0<1-\alpha$. Using inequality of arithmetic and geometric means (AM-GM) we obtain
\[\left(\frac{b_1^2\alpha_0}{\alpha_0} V(x_n)\right)^{\frac{1}{2}}\leq \frac{\alpha_0}{2}V(x_n)+\frac{b_1^2}{2\alpha_0}\]
Using the AM-GM inequality, we write
\[E_{\xi_n\gamma_n}[V(T(x_n,\xi_n)+\gamma_n)]\leq (\alpha +\frac{\alpha_0}{2}) V(x_n)+\frac{b_1^2}{2\alpha_0}+ \bar Q\]
Since $\alpha_0<1-\alpha$, we have $\beta_2:=\alpha+\frac{\alpha_0}{2}<1$ and defining $Q:=\bar Q+\frac{b_1^2}{2\alpha_0}<\infty $, we obtain
\[E_{\xi_n\gamma_n}[V(T(x_n,\xi_n)+\gamma_n)]\leq \beta_2 V(x_n)+ Q\]
Taking expectation over the sequence $\{\xi_{n-1},\ldots,\xi_0\}$ and $\{\gamma_{n-1},\ldots, \gamma_0\}$ and using induction, we obtain
\begin{eqnarray}
E_{\xi_0^n,\gamma_0^n}[\parallel x_{n+1} \parallel^2]&\leq& E_{\xi_0^n\gamma_0^n}[V(T(x_n,\xi_n)+\gamma_n)]\leq \beta_2^{n+1} V(x_0)+Q\sum_{k=0}^n\beta_2^k\nonumber\\
&\leq & \beta_2^n c_2 \parallel x_0\parallel^2+\frac{Q}{1-\beta_2}
\end{eqnarray}

\end{proof}

Proof of Theorem \ref{linearization}.
\begin{proof}
From the mean square exponential incremental stability of system (\ref{big_system}) we know that following is true 
\begin{eqnarray}
x_{n+1}=f(x_n)+\xi_n Bk(x_n)+\xi_n B w_n\label{system_appendix}
\end{eqnarray}%
\[E_{\xi_0^n}[\parallel x_{n+1}-y_{n+1}\parallel^2]\leq K_1 \beta_1^n \parallel x_0-y_0\parallel^2\]
for $x_0,y_0\in \mathbb{R}^N$ and for some positive constants $K_1$ and $\beta_1<1$. Since, $w_n$ is exogenous input,   let $w_n=- k(z_n)$ in (\ref{system_appendix}) with $z_n$ evolving according to the dynamics $z_{n+1}=f(z_n)$. Hence, we have

\begin{eqnarray}
z_{n+1}&=&f(z_n)\label{observer_system1}\\
x_{n+1}&=&f(x_n)+\xi_n Bk(x_n)-\xi_n B k(z_n).\label{observer_system}
\end{eqnarray}
Since system (\ref{observer_system}) is mean square exponentially incrementally stable, hence all the trajectories of
system (\ref{observer_system}) will converge to each other.
One particular trajectory of the system (\ref{observer_system}), say $\bar x_n$, is obtained by taking the initial condition $\bar x_0=z_0$, which gives us $\bar x_n=z_n$ for all $n\geq 0$. Hence, from the mean-square exponential incremental stability of system (\ref{observer_system}), it follows
\begin{eqnarray}
E_{\xi_0^n}\left[ \parallel x_{n+1}-z_{n+1}\parallel^2\right]=E_{\xi_0^n}\left[ \parallel x_{n+1}-\bar x_{n+1}\parallel^2\right]\leq K_1 \beta_1^n \parallel x_0 -\bar x_0\parallel^2= K \beta^n \parallel  x_0 -z_0\parallel^2,\label{eqnstab}
\end{eqnarray}
for $\forall n\geq 0$ and Lebesgue almost all initial conditions, $x_0, z_0 \in \mathbb{R}^N$ and, in particular, for $z_0=0$. Using  (\ref{observer_system1}) and (\ref{observer_system}), we obtain

\begin{eqnarray}
e_{n+1}:=z_{n+1}-x_{n+1}=(f(z_n)+\xi_n B k(z_n))-(f(x_n)+\xi_n B k(x_n)).\label{error}
\end{eqnarray}
 Define ${\cal A}(z_n,\xi_n):=\frac{\partial f}{\partial z}(z_n)+\xi_n B \frac{\partial k}{\partial z}(z_n)$. Then, using (\ref{error}) and the Mean value theorem for a vector valued function , we obtain
\begin{eqnarray}
e_{n+1}&=&\int_0^1 {\cal A}(z_n-se_n,\xi_n)ds e_n=\left(\int_0^1 {\cal A}(z_n-se_n,\xi_n)ds\right) \cdots \left(\int_0^1 {\cal A}(z_0-se_0,\xi_0)ds \right)e_0 \nonumber\\
&=:&\prod_{k=0}^n \left(\int_0^1 {\cal A}(z_k-se_k,\xi_k)ds \right)e_0.
\end{eqnarray}
Note, $e_n$ is a function of $e_0, x_0$, and the random sequence $\xi_0^{n-1}$. Hence, we define ${\cal B}_n(z_0,e_0,\xi_0^{n-1}):=\int_0^1{\cal A}(z_n-se_n,\xi_n)ds$ and ${\cal B}_0^n(z_0,e_0,\xi_0^{n-1}):=\prod_{k=0}^n \int_0^1 {\cal A}(z_k-s e_k,\xi_k)ds$. Using the above definitions and mean-square exponential incremental stability property and (\ref{eqnstab}), we know  there exist positive constants, $K_1<\infty$ and $\beta_1<1$, such that
\[E_{\xi_0^n}\left [\parallel e_{n+1} \parallel^2\right]=e_0' E_{\xi_0^n}\left[{\cal B}_0^n(z_0,e_0,\xi_0^{n})' {\cal B}_0^n(z_0,e_0,\xi_0^{n})\right]e_0 \leq K_1 \beta_1^n e_0'e_0.\]
Note that the above inequality holds true for any positive scaling of $e_0$. Let $\alpha_\ell$ be the scaling of $e_0$, where $\{\alpha_\ell\}$ be the sequence such that $\lim_{\ell\to \infty}\alpha_\ell=0$. We then have
\begin{eqnarray}
e_0' E_{\xi_0^n}\left[{\cal B}_0^n(z_0,\alpha_\ell e_0,\xi_0^{n})' {\cal B}_0^n(z_0,\alpha_\ell e_0,\xi_0^{n})\right]e_0 \leq K_1 \beta_1^n e_0'e_0.\label{eqn50}
\end{eqnarray}

Let ${\cal B}_0^n (z_0,e_o,\xi_0^{n})_{ij}$ denote the $i^{th}$ row and $j^{th}$ column entry of the matrix ${\cal B}_0^n(z_0,e_0,\xi_0^{n})$.
 From Assumptions \ref{assumption_1} and \ref{assumption_controller}, we know that both $\frac{\partial f}{\partial z}$ and $\frac{\partial k}{\partial z}$ are uniformly bounded. Hence, for any fixed $n$, we apply Dominated Convergence Theorem to the function ${\cal A}_{ij}(z_n-\alpha_\ell e_n,\xi_0^n)$. ${\cal A}_{ij}$ denotes the $ij$ entry of matrix ${\cal A}$. We obtain
\[\lim_{\ell \to \infty} {\cal B}_n(z_0,\alpha_\ell e_0,\xi_0^n)_{ij}=\lim_{\ell \to \infty}\int_0^1 {\cal A}_{ij}(z_n-s\alpha_\ell e_n,\xi_0^n)ds= {\cal A}_{ij}(z_n,\xi_0^n)= {\cal B}_n(z_0,0,\xi_0^n)_{ij}.\]
The above argument holds true for all entries of matrix ${\cal A}$. Therefore, we obtain
\begin{eqnarray}\lim_{\ell\to \infty} {\cal B}_0^n(z_0,\alpha_\ell e_0, \xi_0^{n})={\cal B}_0^n(z_0,0, \xi_0^{n}).\label{eqnn}
\end{eqnarray}
For every fixed $n$, consider the sequence of functions $e_0'{\cal B}_0^n (z_0,\alpha_\ell e_0,\xi_0^n)'{\cal B}_0^n (z_0,\alpha_\ell e_0,\xi_0^n)e_0$, where ${\cal B}_0^n (z_0,e_0,\xi_0^n)=\prod_{k=0}^n \left(\int_0^1 {\cal A}(z_k- se_k,\xi_0^n)ds \right)$. We apply Fatou's Lemma to exchange limit with the expectation to obtain,
\begin{eqnarray}
&e_0'E_{\xi_0^n}\left[\lim_{\ell\to \infty} {\cal B}_0^n(z_0,\alpha_\ell e_0,\xi_0^n)'{\cal B}_0^n(z_0,\alpha_\ell e_0,\xi_0^n)\right]e_0\nonumber\\&\leq \lim_{\ell \to \infty}e_0'E_{\xi_0^n}\left[{\cal B}_0^n(z_0,\alpha_\ell e_0,\xi_0^n)'{\cal B}_0^n(z_0,\alpha_\ell e_0,\xi_0^n)\right]e_0 \leq K_1 \beta_1^n e_0'e_0.\label{eqnn2}
\end{eqnarray}

%
%
Using (\ref{eqn50}), (\ref{eqnn}), and (\ref{eqnn2}), we obtain
\[e_0'E_{\xi_0^n}\left[{\cal B}_0^n(z_0,0,\xi_0^n)'{\cal B}_0^n(z_0,0,\xi_0^n)\right]e_0\leq K \beta^n e_0'e_0,\]
where ${\cal B}_0^n(z_0,0,\xi_0^n)$ is the product of Jacobian matrices, ${\cal A}(z_n,\xi_n)=\frac{\partial f}{\partial z}(z_n)+\xi_n B\frac{\partial k}{\partial z}(z_n)$, computed along the trajectory of the system $z_{n+1}=f(z_n)$. Hence, we obtain
\[E_{\xi_0^n}\left[e_0' \left(\prod_{k=0}^n {\cal A}(z_k,\xi_k)\right)'\left(\prod_{k=0}^n {\cal A}(z_k,\xi_k)\right)e_0\right]\leq K_1 \beta_1^n e_0'e_0,\;\;\;\forall n\geq 0.\]
Since the matrices in the above equation are independent of $e_0$, we can write

\begin{eqnarray}
E_{\xi_0^n}\left[\eta_0' \left(\prod_{k=0}^n {\cal A}(z_k,\xi_k)\right)'\left(\prod_{k=0}^n {\cal A}(z_k,\xi_k)\right)\eta_0\right]\leq K_1 \beta_1^n \eta_0'\eta_0,\;\;\;\;\forall n\geq 0,\label{equation_linear}
\end{eqnarray}
where the evolution of $\eta_n$ is governed by $\eta_{n+1}=\left(\frac{\partial f}{\partial z}(z_n)+\xi_n\frac{\partial k}{\partial z}(z_n)\right)\eta_n$ and $z_n$ by $z_{n+1}=f(z_n)$.   Since (\ref{equation_linear}) is true for almost all initial condition $z_0\in X$ and in particular for $z_0=0$, we prove the statement of the Theorem after relabeling the state $z_n$ to $x_n$.
\end{proof}

\end{document}